\documentclass[12pt,oneside,reqno]{amsart}
\addtolength{\oddsidemargin}{-.4in}
\addtolength{\evensidemargin}{-.4in}
\addtolength{\textwidth}{1.0in} \addtolength{\textheight}{1in}
\usepackage{amsmath,amssymb}
\usepackage{amscd}
\usepackage{tikz}
%%%%%%%%%%%%%%%%%%%% Text italic %%%%%%%%%%%%%%%%%%%%%%%%%%%%
\theoremstyle{plain}
\newtheorem{thm}{Theorem}[section]
\newtheorem{theorem}[thm]{Theorem}

\newtheorem{lemma}[thm]{Lemma}

\newtheorem{proposition}[thm]{Proposition}
%%%%%%%%%%%%%%%%%%%% Text roman %%%%%%%%%%%%%%%%%%%%%%%%%%%%%
\theoremstyle{definition}

\newtheorem*{remark}{Remark}

\newtheorem*{claim}{Claim}

\newtheorem{definition}[thm]{Definition}

\numberwithin{equation}{section}
%%%%%%%% Special symbols %%%%%%%%%%%%%%%%%%%%%%%%%%%%%%%
% Skriptbuchstaben
\newcommand{\0}{{\mathcal O}}

% Sonderbuchstaben mit Doppellinie

\newcommand{\BP}{{\mathbb P}}

\newcommand{\Q}{{\mathbb Q}}
\newcommand{\R}{{\mathbb R}}

\newcommand{\Z}{{\mathbb Z}}
%Sonstiges

\newcommand{\Aut}{{\rm Aut}}

\newcommand{\id}{{\rm id}}

\newcommand{\disc}{{\rm disc}}

\def\Hom{\mathop{\rm Hom}\nolimits}

\def\Mat{\mathop{\rm Mat}\nolimits}

\title[K3 surfaces]{Automorphisms of K3 surfaces with Picard number two}
\author{Kwangwoo Lee}
%\date{\today}
\begin{document}
\maketitle  \setcounter{tocdepth}{1}

\begin{abstract}
It is known that the automorphism group of a K3 surface with Picard number two is either an infinite cyclic group or an infinite dihedral group when it is infinite. In this paper, we study the generators of such automorphism groups. We use the eigenvector corresponding to the spectral radius of an automorphism of infinite order to determine the generators.
%Moreover, by finding the generators, we can determine whether an automorphism group of a K3 surface with Picard number two is the infinite cyclic group or the infinite dihedral group. 
\end{abstract}

\section{Introduction}\label{Introduction}
The aim of this paper is to give some conditions on the generators of the automorphism group of a K3 surface of Picard number $2$ (Theorem \ref{theorem1} and Theorem \ref{theorem2}). For a K3 surface $X$ of rank two Picard lattice, Galluzzi, Lombardo and Peters \cite{GLP} applied the classical theory of binary quadratic forms to prove that the automorphism group $\Aut(X)$ is trivial or $\Z_2$ if it is finite. Moreover, if it is infinite, the automorphism group is an infinite cyclic group or an infinite dihedral group.
%In \cite{J} (Theorem 51c), Jones determined all proper automorphs or isometries of determinant $1$ of a binary quadratic form $f=ax^2+bxy+cy^2$ with $\gcd (a,b,c)=1$.
% Unfortunately, this does not give  us information about what the generators are. Moreover, this also does not say about when the automorphism group is $\Z$ or $\D_{\infty}$ when its Picard lattice is ambiguous (cf. Definition \ref{Ambiguous}). 
In this paper, we find conditions for the generators of the automorphism group by using the eigenvector corresponding to the spectral radius of an automorphism of infinite order.
% With this information we can find generators of an automorphism group and hence we can determine whether an automorphism group $\Aut(X)$ is the infinite cyclic group or the infinite dihedral group. 

Let $g$ be an automorphism of a compact complex surface $X$. It is known that the topological entropy $h(g)$ is determined by the spectral radius $\rho$ of $g^*$ acting on $H^*(X)$, that is, $h(g)=\log \rho(g^*\vert H^2(X)).$ If $h(g)>0$, then a minimal model for $X$ is either a K3 surface, an Enriques surface, a complex torus or a rational surface \cite{C}. 
%Thus $g$ has a positive entropy if and only if the spectral raduis of $g^*\vert H^2(X)$ is a Salem number (the largest real root of a Salem polynomial).
In the sense of dynamics of automorphisms, it is a natural question to find a minimal possible entropy.
For example, for a K3 surface, one constructs an automorphism synthetically by lattice theory and Torelli theorem to find the minimal entropy (\cite{Mc}).
%For example, for a K3 surface $X$, one constructs an isometry $\phi$ acting on Picard lattice $S_X$ and use lattice theory, Torelli theorem to extend it to an automorphism $g$ of $X$ with $g^*\vert S_X=\phi$.
However, in this paper, we go the other way around. That is, using topological entropies, we determine automorphisms of $X$. More precisely, by finding eigenvectors of an automorphism, we can find the generators of the automorphism group of a K3 surface. %Moreover, using the eigenvectors, we can also  determine whether the automorphism group is $\Z$ or $\D_{\infty}$.  

The main observation of this idea is the fact that the solutions of the Pell equation associated with a non-square number form an infinite group generated by \textit{Pell multiples} of finite solutions (cf. Section \ref{Pell equations}). For some $k\in \Z\setminus \{0,-1\}$, if we consider a non-empty set $A_k$ of divisors each of which has the self-intersection number $2k$, then by $g\in \Aut(X)$, $g^*(D)\in A_k$ for any $D\in A_k$, where $g^*:=g^*\vert S_X$ and $S_X$ is the Picard lattice of $X$. In particular, for $(u,v)\in A_k$, if we consider a sequence of $(u_n,v_n):=g^{n*}(u,v)$ and if the ratio $u_n/v_n$ converges to $U/V$ as $n$ increases, then $(U,V)$ will be the eigenvector corresponding to the spectral radius of $g^*\vert S_X$.
We use this eigenvector to find $g$. Furthermore, using this eigenvector and $g$, we can also determine anti-symplectic involutions when $\Aut(X)\cong\Z_2\ast \Z_2$.

Let $X$ be a K3 surface with Picard lattice $S_X$ whose self-intersection matrix is
\begin{equation}\label{Gram matrix}
Q_{S_X}=\begin{pmatrix} 2a&b\\b&2c \end{pmatrix}
\end{equation}
for some basis with $d:=-\disc(S_X)=b^2-4ac>0$. Let $g$ be an automorphism of $X$ with $g^*\vert S_X$ given by the matrix 
\begin{equation}\label{Automorphism}
\begin{pmatrix} \alpha&\beta\\ \gamma&\delta \end{pmatrix}. 
\end{equation}

\begin{theorem}\label{theorem1}
For a K3 surface $X$ with intersection matrix of Picard lattice $S_X$ as above, an automorphism $g$ of infinite order acting on $S_X$ as in \eqref{Automorphism} satisfies 
\begin{equation}\label{Relations1}
\gamma=-\frac{a}{c}\beta \text{  , } \delta=\alpha-\frac{b}{c}\beta \text{  and  } \alpha^2-\frac{b}{c}\alpha\beta+\frac{a}{c}\beta^2=1.
\end{equation}
Moreover, $g^*|S_X$ is a power of an isometry $h$ of $S_X$ defined by the matrix
\begin{equation}\label{isometry}
h=\begin{pmatrix} \alpha_1&\beta_1\\ -\frac{a}{c}\beta_1&\alpha_1-\frac{b}{c}\beta_1 \end{pmatrix},
\end{equation}
where $(2\alpha_1-b\frac{\beta_1}{c},\frac{\beta_1}{c})$ is the minimal positive solution of Pell equation $x^2-dy^2=4$.
\end{theorem}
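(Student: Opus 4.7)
The plan has two parts: first deriving the three relations in \eqref{Relations1} from the isometry constraints on $g^*|S_X$, then identifying $g^*|S_X$ as a power of $h$ via the classical theory of Pell equations.

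\textbf{Deriving the relations.} Writing $M$ for the matrix in \eqref{Automorphism} and $Q$ for $Q_{S_X}$, the isometry condition $M^T Q M = Q$ expands into three scalar equations in $\alpha,\beta,\gamma,\delta$, and the determinant gives $(\det M)^2 = 1$. I would first rule out $\det M = -1$: an element of $O(q,\R)$ for an indefinite binary form $q$ with determinant $-1$ swaps the two isotropic lines of $q$, and a direct computation in an isotropic basis shows that its square is the identity. Since $g$ has infinite order, so does $g^*|S_X$ (the transcendental action $g^*|T_X$ has finite order, its characteristic polynomial being a power of a cyclotomic polynomial), so $\det M = 1$. Now, using $M^{-1} = \begin{pmatrix}\delta & -\beta \\ -\gamma & \alpha\end{pmatrix}$, I would rewrite the isometry condition in the equivalent form $M^T = Q M^{-1} Q^{-1}$. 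Comparing the $(2,1)$- and $(1,1)$-entries and clearing denominators via $d = b^2 - 4ac$ yields the linear relations $a\beta + c\gamma = 0$ and $\alpha - \delta = b\beta/c$; substituting these into $a\alpha^2 + b\alpha\gamma + c\gamma^2 = a$ and dividing by $a$ then produces the quadratic relation.

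\textbf{Pell correspondence.} Set $t := 2\alpha - b\beta/c$ (the trace of $M$) and $u := \beta/c$. The quadratic relation translates directly to
\[t^2 - du^2 = 4\bigl(\alpha^2 - \tfrac{b}{c}\alpha\beta + \tfrac{a}{c}\beta^2\bigr) = 4,\]
so $(t,u)$ is an integral solution of Pell's equation $x^2 - dy^2 = 4$ (integrality follows from that of $\alpha,\beta$ and the linear relations). Since $g$ has infinite order, $(t,u)\neq (\pm 2,0)$ and $d$ is not a square. The classical structure theorem then presents the full solution group as $\{\pm\epsilon^n : n \in \Z\}$, with $\epsilon$ corresponding to the minimal positive solution $(t_1,u_1)$. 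By definition, the matrix $h$ in \eqref{isometry} is the one whose $(t,u)$-parameters equal $(t_1,u_1)$, so $g^*|S_X \in \{\pm h^n : n \in \Z\}$.

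\textbf{Excluding the minus sign.} The eigenvalues of $h$ are $(t_1\pm u_1\sqrt d)/2$, both positive (the dominant one exceeding $1$), so $h^n$ has positive real eigenvalues for every $n \in \Z$. A hypothetical $g^*|S_X = -h^n$ would have a negative dominant eigenvalue and its dominant eigenvector would fall outside the closure of every component of the positive cone in $S_X\otimes\R$; but $g^*$ preserves the ample cone of $X$, so by a Perron--Frobenius-type argument its dominant eigenvalue must be positive. Therefore $g^*|S_X = h^n$ for some nonzero integer $n$. The main obstacle is the dichotomy $\det M = \pm 1$ and eliminating the minus sign at the end; the algebraic derivation of the three relations and the Pell identification itself are largely mechanical once these geometric inputs are in place.
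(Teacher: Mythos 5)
Your argument is correct in substance and reaches the same conclusions, but the first half follows a genuinely different route from the paper. The paper never argues through the determinant: it derives the two linear relations in \eqref{Relations1} dynamically, by taking a class in $A_k$, observing that its $g^{n*}$-orbit stays in $A_k$ and (via Pell multiples, Lemma \ref{Limit}) that the slopes $x_n/y_n$ converge to $\tfrac{-b+\sqrt d}{2a}$, so that this isotropic direction is an eigendirection of $g^*$; splitting the resulting identity in $\Q[\sqrt d]$ into rational and irrational parts gives $a\beta+c\gamma=0$ and $\alpha+\tfrac{b}{a}\gamma-\delta=0$, and only the quadratic relation comes from $g^{*T}Q_{S_X}g^*=Q_{S_X}$. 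You instead extract all three relations purely from the isometry condition once $\det=1$ is known, and you supply the determinant dichotomy explicitly: improper isometries of an indefinite binary lattice are involutions, while $g^*|S_X$ has infinite order because $g^*|T_X$ has finite order. This is more self-contained (no need for $A_k\neq\emptyset$, Lemma \ref{Limit}, or the convergence of Pell multiples) and it makes explicit a point the paper only handles implicitly (an isometry fixing an isotropic line is automatically proper); the paper's dynamical route, on the other hand, is what motivates the eigenvector philosophy of the whole article and is reused in the proof of Theorem \ref{theorem2}. Your second half — the trace/Pell parameters $(2\alpha-b\beta/c,\,\beta/c)$, the structure theorem for $x^2-dy^2=4$ (Theorem \ref{4-Pell equation}), and exclusion of $-h^n$ because $g^*$ preserves the ample cone — is essentially the paper's Claim plus its remark that the sign-reversed solutions swap the positive and negative cones; your phrase that the dominant eigenvector of $-h^n$ lies outside the closed positive cone is inaccurate (it is the same isotropic eigenvector, lying on the boundary), but your actual conclusion via Perron--Frobenius for cone-preserving maps is the right argument. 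One caveat affecting both you and the paper equally: the integrality of $u=\beta/c$ is not automatic from $\alpha,\beta\in\Z$ and the linear relations; one only gets $c\mid a\beta$ and $c\mid b\beta$, which yields $c\mid\beta$ when $\gcd(a,b,c)=1$ but can fail for imprimitive forms, where the Pell equation should be taken with the reduced discriminant — so your parenthetical "integrality follows from that of $\alpha,\beta$ and the linear relations" deserves a primitivity hypothesis or an extra argument, exactly as the paper's corresponding step does.
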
 
%\begin{remark} If $\gcd(a,b,c)=1$, this is the same with Theorem 51c in \cite{J}.\textcolor{red}{remove?}
%\end{remark}

Furthermore, suppose that we have $\Aut(X)\cong \Z_2\ast \Z_2$. Then we have the following result.

\begin{theorem}\label{theorem2}
For a K3 surface $X$ with Picard lattice $S_X$ as above, an involution $\iota$ acting on $S_X$ by the matrix in \eqref{Automorphism} satisfies
\begin{equation}\label{Relations2}
\delta=-\alpha, \gamma=-\frac{b}{c}\alpha+\frac{a}{c}\beta \text{  and  } \alpha^2-\frac{b}{c}\alpha\beta+\frac{a}{c}\beta^2=1.
\end{equation}
%Conversely, an isometry in \eqref{Automorphism} satisfying \eqref{Relations2} can be extended to an anti-symplectic involution if it acts on $-\id$ on $S_X$ and preseves the positive cone of $X$. 
\end{theorem}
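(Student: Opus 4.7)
The plan is to combine two identities on the matrix $M=\begin{pmatrix}\alpha&\beta\\\gamma&\delta\end{pmatrix}$ representing $\iota^*|S_X$: the involution identity $M^2=I$, and the lattice isometry identity $M^T Q_{S_X} M = Q_{S_X}$. Each is a direct matrix multiplication, and together they will force the three relations in \eqref{Relations2}.

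First I would expand $M^2=I$ entrywise, obtaining $(\alpha+\delta)\beta=(\alpha+\delta)\gamma=0$ together with $\alpha^2+\beta\gamma=\delta^2+\beta\gamma=1$. This leaves two possibilities: either $\alpha+\delta=0$, or $\beta=\gamma=0$ with $\alpha,\delta\in\{\pm 1\}$. The purely scalar cases $M=\pm I$ must be ruled out: $M=-I$ would reverse the ample cone, contradicting that $\iota$ is an automorphism of $X$; and $M=I$ is incompatible with the hypothesis $\Aut(X)\cong\Z_2\ast\Z_2$, since in the infinite dihedral group each generating involution conjugates the infinite-order element to its inverse and so cannot act trivially on $S_X$. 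The remaining diagonal options $\alpha=\pm 1$, $\delta=\mp 1$, $\beta=\gamma=0$ already satisfy $\delta=-\alpha$, so they are subsumed by the main case. I conclude $\delta=-\alpha$ and $\alpha^2+\beta\gamma=1$.

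Next I would exploit the isometry condition $M^T Q_{S_X} M = Q_{S_X}$. The $(2,2)$-entry reads $a\beta^2+b\beta\delta+c\delta^2=c$; substituting $\delta=-\alpha$ and dividing by $c$ gives precisely the third relation of \eqref{Relations2}. Multiplying back by $c$ and subtracting from $c\alpha^2+c\beta\gamma=c$ (from Step 1) yields $c\beta\gamma=a\beta^2-b\alpha\beta=\beta(a\beta-b\alpha)$. For $\beta\neq 0$, dividing by $\beta$ produces $\gamma=-\frac{b}{c}\alpha+\frac{a}{c}\beta$, which is the middle relation. When $\beta=0$, the third relation forces $\alpha=\pm 1$, and the $(1,1)$-entry of the isometry identity collapses to $\gamma(b\alpha+c\gamma)=0$, giving the same formula $\gamma=-(b/c)\alpha$ (the alternative root $\gamma=0$ occurs only in the special situation $b=0$, where both expressions agree).

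The genuine obstacle is Step 1, namely the exclusion of the scalar involutions $M=\pm I$. The algebra separating the trace-zero branch from the scalar branch is just Cayley--Hamilton applied to a $2\times 2$ involution and is mechanical; the real content is geometric, with ample-cone preservation ruling out $M=-I$ and the dihedral structure of $\Aut(X)$ ruling out $M=I$. This is precisely where the hypothesis $\Aut(X)\cong\Z_2\ast\Z_2$ of Theorem \ref{theorem2}, as opposed to the cyclic case of Theorem \ref{theorem1}, plays its essential role. Once trace-zero is established, the three relations of \eqref{Relations2} drop out from the single elimination above.
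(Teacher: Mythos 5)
Your proof is correct, but it takes a genuinely different route from the paper's. The paper argues dynamically: writing $\iota=g^n\circ\sigma$ in the infinite dihedral group, it observes that $\iota^*$ interchanges the two eigendirections of the infinite-order isometry $g^{n*}|S_X$, so the ratio $\tfrac{-b+\sqrt{d}}{2a}$ of Lemma \ref{Limit} is sent to its conjugate $\tfrac{-b-\sqrt{d}}{2a}$; comparing coefficients in $\Q[\sqrt{d}]$ in \eqref{Ratio of tau} gives $\delta=-\alpha$ and $\gamma=-\tfrac{b}{c}\alpha+\tfrac{a}{c}\beta$, and the last relation then follows from $\det(\iota^*|S_X)=-1$. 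You instead extract everything from the two purely algebraic identities $M^2=I$ and $M^{T}Q_{S_X}M=Q_{S_X}$ for $M=\iota^*|S_X$, which is more elementary and bypasses the eigenvector/Pell machinery and the anti-symplecticity of the generators entirely; the hypothesis $\Aut(X)\cong\Z_2\ast\Z_2$ enters only to exclude $M=I$, and preservation of the ample cone excludes $M=-I$. Your treatment of the degenerate branches ($\beta=0$, and $\beta=\gamma=0$ forcing $b=0$ via the off-diagonal entry of the isometry identity) is sound. The one place you are terser than you should be is the exclusion of $M=I$: the relation $\iota g\iota^{-1}=g^{-1}$ together with $\iota^*|S_X=\id$ only yields $(g^*|S_X)^2=\id$, so you still need that an infinite-order automorphism acts on $S_X$ with order greater than two, i.e.\ that the kernel of $\Aut(X)\rightarrow O(S_X)$ is finite --- a standard fact for projective K3 surfaces, and one the paper itself uses implicitly when it asserts that $\iota$ swaps the eigenvectors of $g^{n*}$ (which requires the two eigenvalues $\rho\neq 1/\rho$ to be distinct). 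With that fact cited, your argument is complete and, if anything, slightly more self-contained than the paper's.
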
 

\begin{remark}\label{anti-symplectic only}
It is known that a symplectic involution occurs only if Picard number is greater than $8$ (cf. \cite{N1}). 
\end{remark}
 
%\begin{remark} If $\gcd(a,b,c)=1$, then $c=1$ in \eqref{Gram matrix}, i.e., $f=ax^2+bxy+cy^2$ represents $1$.
%\end{remark}

\begin{remark}
By Lemma \ref{criterion} in Section \ref{Lattices}, we can easily see whether an isometry acts on $S_X$ as $-\id$ or not. If it acts on $S_X$ as $-\id$, then it extends to an isometry of $H^2(X,\Z)$. Moreover, if it preserves the ample cone, then by Torelli theorem, it extends to an anti-symplectic involution.
%since $S_X$ has no divisors with self-intersection number $-2$, the ample cone of $X$ is nothing but the positive cone. Hence we can also easily check if the isometry preserves the positive cone.
\end{remark}
The structure of this paper is the following: in Section \ref{Preliminaries} we recall some results about Pell equations and lattices. In Section \ref{Proof} we prove Theorem \ref{theorem1} and Theorem \ref{theorem2}.  In Section \ref{Applications} we apply our results to several examples  to find the generators of the automorphism group of a K3 surface of Picard number $2$.

\section{Preliminaries} \label{Preliminaries}
\subsection{Pell equations}\label{Pell equations}
For a positive integer $d$, an equation of the form
\begin{equation}\label{Pell equation}
u^2-dv^2=1
\end{equation}
is called a \textit{Pell equation}. We are interested in solutions $(u,v)$ where $u$ and $v$ are integers. Solutions with $u>0$ and $v>0$ will be called \textit{positive solutions}. It is known in \cite{L} that for every non-square positive integer $d$, the equation \eqref{Pell equation} has a nontrivial solution with $v\neq 0$. Moreover, the solutions of Pell equation can be generated from the smallest positive solution $(u_1,v_1)$ of \eqref{Pell equation}.

\begin{theorem}\label{solutions of Pell equation} (\cite{AG}, Sec. 6.6. Theorem 7)
If $d$ is a square, the only solutions of \eqref{Pell equation} are $u=\pm 1$ and $v=0$.

If $d$ is not a square, let $(u_1,v_1)$ be the smallest positive solution of \eqref{Pell equation} and write $\alpha=u_1+v_1\sqrt{d}$, then all solutions of \eqref{Pell equation} are
\begin{equation*}
\{(\pm u_n,v_n)\mid u_n, v_n\in \Z \text{ such that }  u_n+v_n\sqrt{d}=(\alpha)^n, n\in \Z\}.
\end{equation*}
\end{theorem}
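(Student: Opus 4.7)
The plan is to split into the two cases and, in the non-square case, exploit the multiplicative structure of norms in $\Z[\sqrt{d}]$ together with a minimality/squeezing argument.

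For the case when $d$ is a square, say $d=k^2$ with $k\in\Z_{\geq 0}$, the equation factors as $(u-kv)(u+kv)=1$ in $\Z$. Since both factors are integers of product $1$, each must be $\pm 1$, and adding the two equations $u-kv=\pm 1$, $u+kv=\pm 1$ (with matching signs) forces $v=0$ and $u=\pm 1$. This case is immediate.

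The main work is for $d$ non-square. First I would introduce the norm $N:\Z[\sqrt{d}]\to \Z$ defined by $N(u+v\sqrt d)=(u+v\sqrt d)(u-v\sqrt d)=u^2-dv^2$, observe it is multiplicative, and note that a pair $(u,v)$ solves \eqref{Pell equation} if and only if $\xi=u+v\sqrt d$ satisfies $N(\xi)=1$. Thus the set of solutions is closed under multiplication in $\Z[\sqrt d]$ and inversion (since $\xi^{-1}=u-v\sqrt d$ is again an element of $\Z[\sqrt d]$ with norm $1$), i.e.\ it forms a subgroup $G\subset \Z[\sqrt d]^\times$. In particular, for every $n\in\Z$, the element $\alpha^n=u_n+v_n\sqrt d$ with $\alpha=u_1+v_1\sqrt d$ produces a solution $(u_n,v_n)$, and negating the first coordinate produces another, so one containment is clear.

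For the reverse containment I would show that every element of $G$ is of the form $\pm \alpha^n$. Using the automorphism $\sigma:u+v\sqrt d\mapsto u-v\sqrt d$, every $\xi\in G$ is one of $\pm \xi,\pm\sigma(\xi)$, so it suffices to treat $\xi>1$ and show $\xi=\alpha^n$ for some $n\geq 1$. Since $\alpha>1$ (by positivity of $(u_1,v_1)$), there is a unique $n\geq 1$ with $\alpha^n\leq \xi<\alpha^{n+1}$. Then $\eta:=\xi\alpha^{-n}\in G$ satisfies $1\leq \eta<\alpha$. Writing $\eta=u'+v'\sqrt d$, the relations $\eta\cdot \sigma(\eta)=1$ and $\eta\geq 1$ give $0<\sigma(\eta)\leq 1\leq \eta$, from which one extracts $u'=\tfrac12(\eta+\sigma(\eta))>0$ and $v'=\tfrac{1}{2\sqrt d}(\eta-\sigma(\eta))\geq 0$. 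Hence $(u',v')$ is either the trivial solution $(1,0)$ or a positive solution strictly smaller (in the sense $u'+v'\sqrt d<\alpha$) than $(u_1,v_1)$. Minimality of $(u_1,v_1)$ excludes the second possibility, so $\eta=1$ and $\xi=\alpha^n$. Putting the sign cases together, the set of all integer solutions is exactly $\{(\pm u_n,v_n):n\in\Z\}$.

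The genuinely hard part of any proof of Pell's theorem is the \emph{existence} of the minimal positive solution $(u_1,v_1)$ when $d$ is non-square; this typically requires Dirichlet's approximation theorem or the theory of continued fractions. However, since that existence is already included in the hypothesis of the theorem as stated (and referenced to \cite{L}), the remaining content is the structural description above, whose only delicate step is the squeezing argument that forces $\eta=1$ via minimality.
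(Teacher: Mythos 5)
Your proof is correct, and it is the standard argument: the paper itself offers no proof of this statement (it is quoted verbatim from Adams--Goldstein with a citation), so there is nothing internal to compare against. Your treatment of the square case by factoring, and of the non-square case via the norm form on $\Z[\sqrt d]$ plus the squeeze $1\leq \eta=\xi\alpha^{-n}<\alpha$ forcing $\eta=1$ by minimality, is exactly how this is done in the number-theory literature, and you correctly isolate the one genuinely hard ingredient (existence of a nontrivial solution) as being part of the hypothesis rather than the claim. One microscopic slip: for $\xi>1$ you should choose the unique $n\geq 0$ (not $n\geq 1$) with $\alpha^n\leq \xi<\alpha^{n+1}$, since a priori $\xi$ could lie in $(1,\alpha)$; the squeezing argument then rules that interval out and yields $n\geq 1$ a posteriori. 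This does not affect the validity of the proof.
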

\begin{remark} All solutions of \eqref{Pell equation} are units of $\Z[\sqrt{d}]$.
\end{remark} 

More generally, for $m\in \Z\setminus \{0\}$, the equation
\begin{equation}\label{Generalized Pell equation}
u^2-dv^2=m
\end{equation} 
is called a \textit{generalized Pell equation}.

\begin{theorem}\label{generalized Pell equation} (\cite{AG}, Sec. 6.6. Theorem 9)
If the generalized Pell equation \eqref{Generalized Pell equation} has one solution, then it has infinitely many solutions.
%Fix $\alpha=a+b\sqrt{d}$ where $a^2-db^2=1$ with $a, b\in \Z_{>0}$. Then for each $n\in \Z-\{0\}$, every solution of $u^2-dv^2=n$ is a power of $\alpha$ times $U+V\sqrt{d}$ where $U^2-dV^2=n$ with $\vert U\vert \leq \sqrt{\vert n\vert}(\sqrt{\alpha}+1)/2$ and $\vert V\vert \leq \sqrt{\vert n\vert}(\sqrt{\alpha}+1)/(2\sqrt{d})$.
\end{theorem}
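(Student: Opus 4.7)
The plan is to reduce the problem to the arithmetic of $\Z[\sqrt{d}]$, exploiting the multiplicativity of the norm $N(u+v\sqrt{d}):=u^2-dv^2$ and the infinitude of the group of norm-$1$ units. (Implicitly $d$ is not a perfect square; otherwise $u^2-dv^2$ factors as a product of two integer linear forms and $m$ has only finitely many such factorizations, so the cited result is to be read with that tacit assumption, consistent with the non-square case of Theorem \ref{solutions of Pell equation}.)

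Given a solution $(u_0,v_0)$ of $u^2-dv^2=m$, I would form $\xi:=u_0+v_0\sqrt{d}\in\Z[\sqrt{d}]$, whose norm equals $m$. By Theorem \ref{solutions of Pell equation} the fundamental solution yields a unit $\alpha:=u_1+v_1\sqrt{d}$ with $N(\alpha)=1$, and the integer powers $\alpha^n$, $n\in\Z$, form an infinite family of norm-$1$ units of $\Z[\sqrt{d}]$. Setting $\xi\alpha^n=U_n+V_n\sqrt{d}$ with $U_n,V_n\in\Z$, multiplicativity of the norm gives $U_n^2-dV_n^2=N(\xi\alpha^n)=N(\xi)N(\alpha)^n=m$, so each pair $(U_n,V_n)$ is a new integer solution of the generalized Pell equation.

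To conclude, distinct integers $n$ must yield distinct pairs $(U_n,V_n)$. Since $\sqrt{d}$ is irrational, the set $\{1,\sqrt{d}\}$ is $\Q$-linearly independent, so $\Z[\sqrt{d}]$ is an integral domain and the representation $a+b\sqrt{d}$ with $a,b\in\Z$ is unique. Hence $\xi\alpha^n=\xi\alpha^m$ forces $\alpha^{n-m}=1$, which together with $\alpha>1$ forces $n=m$. Thus from the single starting solution $(u_0,v_0)$ one extracts infinitely many distinct solutions.

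The main obstacle is essentially bookkeeping: verifying that $(U_n,V_n)\in\Z^2$ and that $n\mapsto(U_n,V_n)$ is injective—both immediate from the ring structure of $\Z[\sqrt{d}]$ and from $|\alpha|>1$. The conceptual heart of the argument is simply that Theorem \ref{solutions of Pell equation} supplies an infinite orbit under multiplication by norm-$1$ units, and the multiplicativity of the norm converts that orbit into infinitely many solutions of $u^2-dv^2=m$.
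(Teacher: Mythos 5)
Your proof is correct and follows essentially the same route as the paper: multiply the given solution $u_0+v_0\sqrt{d}$ by the norm-$1$ units $\alpha^n$ coming from Theorem \ref{solutions of Pell equation} and use multiplicativity of the norm. You additionally verify that the resulting solutions are pairwise distinct (and flag the tacit non-square assumption on $d$), details the paper's one-line proof leaves implicit.
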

\begin{proof} 
If $(a,b)$ is a solution of \eqref{Generalized Pell equation}, then for any solution $(u_n,v_n)$ of \eqref{Pell equation}, $(u'_n, v'_n)$ defined by $u'_n+v'_n\sqrt{d}=(u_n+v_n\sqrt{d})(a+b\sqrt{d})$ is also a solution of \eqref{Generalized Pell equation}.
\end{proof}
By a \textit{Pell multiple}, we mean the coefficients $(au+dbv, av+bu)$ of the product $(a+b\sqrt{d})(u+v\sqrt{d})$.

\begin{theorem}(\cite{AA}, Theorem 4.1.3) \label{orbits of general Pell equation}
For a general Pell equation $u^2-dv^2=m$ with $m\neq 0$, it is known that there is a finite set of solutions such that every solution is a Pell multiple of one of these solutions. In other words, there are finite number of orbits of solutions under the group of units $(\Z[\sqrt{d}])^*$.
\end{theorem}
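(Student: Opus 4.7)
The plan is to realize each solution $(u,v)$ as the element $\alpha = u + v\sqrt{d} \in \Z[\sqrt{d}]$, whose field-theoretic norm $N(\alpha) = \alpha \bar\alpha = u^2 - dv^2$ equals $m$. Pell multiplication by the fundamental solution then becomes ordinary multiplication in $\Z[\sqrt{d}]$: writing $\epsilon := u_1 + v_1\sqrt{d} > 1$ for the fundamental solution of \eqref{Pell equation}, Theorem \ref{solutions of Pell equation} identifies the norm-one unit orbit of $\alpha$ with $\{\pm \epsilon^n \alpha : n \in \Z\}$. The task is therefore to produce a finite set of $\alpha$'s from which every norm-$m$ element is obtained by multiplication by some $\pm \epsilon^n$.

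First, by replacing $(u,v)$ with $(-u,-v)$ if necessary, I would assume $\alpha > 0$ as a real number. Since $\epsilon > 1$, there is then a unique $n \in \Z$ for which $\epsilon^n \alpha$ lies in the half-open interval $[\sqrt{|m|},\, \epsilon\sqrt{|m|})$. After replacing $\alpha$ by this normalized representative, we still have $N(\alpha) = m$, and hence $|\bar\alpha| = |m|/|\alpha| \leq \sqrt{|m|}$. Thus both $|\alpha|$ and $|\bar\alpha|$ are bounded above by $\epsilon\sqrt{|m|}$.

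Writing $\alpha = u + v\sqrt{d}$, the identities $u = (\alpha + \bar\alpha)/2$ and $v\sqrt{d} = (\alpha - \bar\alpha)/2$ convert these bounds on $|\alpha|$ and $|\bar\alpha|$ into absolute bounds on $|u|$ and $|v|$. Only finitely many integer pairs fit into such a bounded box, so the set of normalized representatives is finite, which gives the required finite system of orbit representatives.

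The main obstacle is handling sign subtleties cleanly. Solutions come with the ambiguity $(u,v)\leftrightarrow(\pm u, \pm v)$, and the definition of \emph{Pell multiple} uses integer coefficients rather than the underlying element of $\Z[\sqrt{d}]$, so one must verify that the sign choices made while normalizing $\alpha$ do not artificially split orbits that ought to coincide. This is harmless because $-1 \in (\Z[\sqrt{d}])^*$, so the sign ambiguity is absorbed by the $\pm \epsilon^n$ action already present in Theorem \ref{solutions of Pell equation}.
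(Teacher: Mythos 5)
Your argument is correct. The paper itself gives no proof of this statement---it is quoted verbatim from Andreescu--Andrica \cite{AA}---so there is nothing internal to compare against, but your normalization argument (use powers of the fundamental unit $\epsilon>1$ to move $\alpha=u+v\sqrt{d}$ into the fundamental domain $[\sqrt{|m|},\epsilon\sqrt{|m|})$, deduce $|\bar\alpha|\le\sqrt{|m|}$ from $\alpha\bar\alpha=m$, and convert the resulting bounds on $|\alpha|,|\bar\alpha|$ into a finite box for $(u,v)$) is the standard proof and is complete. Two small points worth making explicit: the existence of $\epsilon>1$ requires $d$ to be a non-square, which is the standing assumption in this section of the paper; and your restriction to the subgroup $\{\pm\epsilon^{n}\}$ of norm-one units is not a loss but in fact the correct choice, since a unit of norm $-1$ would carry a solution of $u^2-dv^2=m$ to one of $u^2-dv^2=-m$, and the paper's notion of \emph{Pell multiple} is multiplication by solutions of \eqref{Pell equation} only.
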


In particular for $m=4$, we have the following result.
\begin{theorem}(\cite{AA}, Theorem 4.4.1) \label{4-Pell equation}
Let $d$ be a non-square positive integer. If $(u_1,v_1)$ is the smallest positive solution of $u^2-dv^2=4$, then all solutions are generated by powers of $\alpha=\frac{u_1+v_1\sqrt{d}}{2}$ in the sense that writing $\alpha^n=\frac{u_n+v_n\sqrt{d}}{2}$, $(\pm u_n,v_n)$ is a new solution and all solutions can be obtained in that way.
\end{theorem}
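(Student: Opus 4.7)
The plan is to prove the two directions separately: first, that every integer power $\alpha^n$ produces an integer solution of $u^2-dv^2=4$, and second, that every positive solution arises this way, with the general solution obtained by adjusting signs.

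\medskip

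For the forward direction, I would begin by noting that $\alpha=\tfrac12(u_1+v_1\sqrt d)$ and its Galois conjugate $\bar\alpha=\tfrac12(u_1-v_1\sqrt d)$ satisfy
\[
\alpha+\bar\alpha=u_1\in\Z,\qquad \alpha\bar\alpha=\tfrac14(u_1^2-dv_1^2)=1,
\]
so $\alpha$ is a root of the monic integer polynomial $X^2-u_1X+1$. Hence $\Z[\alpha]=\Z+\Z\alpha$, and for every $n\in\Z$ we can write $\alpha^n=a_n+b_n\alpha$ with $a_n,b_n\in\Z$; substituting the value of $\alpha$ gives $\alpha^n=\tfrac12(u_n+v_n\sqrt d)$ with $u_n=2a_n+b_nu_1$ and $v_n=b_nv_1$ integers. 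Taking norms yields $\tfrac14(u_n^2-dv_n^2)=N(\alpha^n)=N(\alpha)^n=1$, so $(u_n,v_n)$ solves $u^2-dv^2=4$.

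\medskip

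For the converse, the key is a descent argument modeled on the classical proof for $u^2-dv^2=1$. Given any positive solution $(u,v)$, set $\beta=\tfrac12(u+v\sqrt d)$. Since $u,v\ge1$ and $d\ge2$, one has $\beta>1$ and $\bar\beta=\beta^{-1}\in(0,1)$. Because $\alpha>1$, there is a unique $n\ge 0$ with $\alpha^n\le\beta<\alpha^{n+1}$. Set $\gamma=\beta\alpha^{-n}$; then $N(\gamma)=1$ and $1\le\gamma<\alpha$. Writing $\gamma=\tfrac12(u'+v'\sqrt d)$, the previous paragraph applied to $\alpha^{-n}$ (and to the product in $\Z[\alpha]$) shows $u',v'\in\Z$. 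I would then check that $u'\ge 0$ and $v'\ge 0$: from $\gamma\ge 1$ and $N(\gamma)=1$ we have $\bar\gamma=\gamma^{-1}\in(0,1]$, so $u'=\gamma+\bar\gamma>0$ and $v'\sqrt d=\gamma-\bar\gamma\ge 0$. Thus $(u',v')$ is a nonnegative integer solution of $u^2-dv^2=4$. By the minimality of $(u_1,v_1)$ as the smallest positive solution, the inequality $\gamma<\alpha$ forces $(u',v')=(2,0)$, i.e.\ $\gamma=1$ and $\beta=\alpha^n$. Finally, an arbitrary integer solution differs from a positive one by the signs $\pm 1$ acting by $u\mapsto\pm u$, $v\mapsto\pm v$, and the sign of $v$ is absorbed by replacing $n$ with $-n$ (since $\alpha^{-n}=\tfrac12(u_n-v_n\sqrt d)$); the sign of $u$ is exactly the $\pm$ in the statement.

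\medskip

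The main obstacle is the descent step: one has to be careful that $\gamma=\beta\alpha^{-n}$, formed in $\Q(\sqrt d)$, really lies in $\tfrac12\Z[\sqrt d]$ with both coordinates nonnegative integers, and that the exclusion of any solution strictly between $1$ and $\alpha$ is genuinely a consequence of the minimality hypothesis rather than a separate claim. All of this is routine once the norm form and the parity of $u_n,v_n$ (forced by the congruence $u_n\equiv dv_n\pmod 2$ coming from $u_n^2\equiv dv_n^2\pmod 4$) are handled cleanly; the rest of the theorem is then bookkeeping with signs and the $\Z$-action of the unit group.
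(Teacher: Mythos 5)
The paper states this result only as a citation (\cite{AA}, Theorem 4.4.1) and supplies no proof of its own, so there is no internal argument to compare against. Your proof is the standard one --- closure of the norm-one elements $\tfrac12(u+v\sqrt d)$ under multiplication, plus descent by the minimal unit $\alpha$ --- and it is correct; the one genuine subtlety, namely that $\gamma=\beta\alpha^{-n}$ again has integer half-coordinates (which requires the parity congruence $u\equiv dv \pmod 2$ valid for \emph{every} solution, not membership of $\beta$ in $\Z[\alpha]$, since $\beta$ need not lie in $\Z[\alpha]$), is one you explicitly flag and resolve correctly in your closing paragraph.
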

\begin{remark} If $d=b^2-4ac>0$, then $d\equiv 0 \text{ or } 1 \mod 4$. When $d\equiv 1 \mod 4$, for any solution to $u^2-dv^2=4$, $u$ and $v$ are both odd or both even. If the smallest positive solution has both $u_1$ and $v_1$ even, then all solutions have both $u_n$ and $v_n$ even. In this case, every solution to \eqref{Pell equation} is just one-half of a solution to $u^2-dv^2=4$. If $u_1$ and $v_1$ are both odd, then $d\equiv 5 \mod 8$, every third solution has $u$ and $v$ even, and all other solutions have $u$ and $v$ odd. In this case, every solution of \eqref{Pell equation} is just one-half of one of the solutions of $u^2-dv^2=4$ that has both $u$ and $v$ even. 

When $d\equiv 0 \mod 4$, any solution of $u^2-dv^2=4$ has $u$ even. If the smallest positive solution has $v$ even, then all solutions have $v$ even and every solution of \eqref{Pell equation} is just one-half of one of the solutions of $u^2-dv^2=4$. If the smallest solution has $v$ odd, then every other solution has $v$ even and every other solution has $v$ odd. In this case, every solution of \eqref{Pell equation} is just one-half of one of the solutions of $u^2-dv^2=4$ that has $u$ and $v$ both even.
\end{remark} 

\subsection{Lattices}\label{Lattices}
A \textit{lattice} is a pair $(L,b)$ of a free finite rank $\Z$-module $L$ together with a bilinear form $b:L\times L\rightarrow \Z$. A lattice is \textit{even} if $b(x,x)\in 2\Z$ for any $x\in L$, \textit{odd} otherwise. The \textit{discriminant} $\disc(L)$ is the determinant of the matrix of the bilinear form. A lattice is called \textit{non-degenerate} if the discriminant is non-zero and \textit{unimodular} if the discriminant is $\pm1$. If the lattice $L$ is non-degenerate, the pair $(s_+,s_-)$, where $s_{\pm}$ denotes the multiplicity of the eigenvalue $\pm 1$ for the quadratic form associated to $L\otimes \R$, is called a \textit{signature} of $L$. An \textit{isometry} of a lattice is an isomorphism preserving the bilinear form. The \textit{orthogonal group} $O(L)$ consists of all isometries of $L$.

 For a lattice $(L,b)$, the dual lattice $L^*$ is defined by
\begin{equation*} 
L^*=\Hom_{\Z}(L,\Z)=\{x\in L\otimes \Q \mid b(x,y)\in \Z \text{ for any } y\in L\}.
\end{equation*}
We have a natural inclusion $L\hookrightarrow L^*$ and the \textit{discriminant group} of $L$ is $A(L)=L^*/L$. 
The bilinear form on $L$ induces a symmetric bilinear form $b^*:L^*\times L^*\rightarrow \Q$. Moreover, $b^*$ induces a symmetric bilinear form $b_L:A(L)\times A(L)\rightarrow \Q/\Z$ and thus a quadratic form $q_L:A(L)\rightarrow \Q/\Z$. 

Whenever $L$ is even, $q_L$ takes values in $\Q/2\Z$. By $O(A(L))$ we denote the group of automorphisms of $A(L)$ preserving $q_L$. The inclusion of $L$ into $L^*$ yields a homomorphism $\Phi:O(L)\rightarrow O(A(L))$. For a non-degenerate lattice $L$ of signature $(1,k)$ with $k>0$, we have the decomposition
\begin{equation}
\{x\in L\otimes \R \mid x^2>0 \}=C_L\cup (-C_L)
\end{equation}
into two disjoint cones. We define
\begin{equation}
O^+(L):=\{ g \in O(L) \mid g(C_L)=C_L \}.
\end{equation}
Note that $O^+(L)$ is a subgroup of $O(L)$ of index $2$.

We state some results about lattices which will be used in later sections.

\begin{theorem} \label{Existence} (\cite{N2}, Theorem 1.14.4.) For any even lattice $L$ of signature $(1,\rho)$ with $\rho\leq 9$, there exists a projective K3 surface $X$ such that $S_X\cong L$.
\end{theorem}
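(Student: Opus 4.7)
The plan is to combine Nikulin's primitive embedding theorem with the surjectivity of the period map for K3 surfaces. The strategy is: first embed $L$ primitively into the K3 lattice; then select a very general period in the orthogonal complement to produce a marked K3 surface whose Picard lattice is exactly $L$; finally, deduce projectivity from the hyperbolic signature of $L$.

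First, I would embed $L$ primitively into $\Lambda := U^{\oplus 3} \oplus E_8(-1)^{\oplus 2}$, the unique even unimodular lattice of signature $(3, 19)$ (the K3 lattice). A sufficient condition from Nikulin's primitive embedding theorem is that the signature $(1, \rho)$ of $L$ satisfies the componentwise inequalities $1 \leq 3$ and $\rho \leq 19$, together with $\rk(L) + \ell(A(L)) < \rk(\Lambda)$. Since $\rk(L) = 1 + \rho \leq 10$ and $\ell(A(L)) \leq \rk(L) \leq 10$, one has $\rk(L) + \ell(A(L)) \leq 20 < 22 = \rk(\Lambda)$, so the embedding exists. Let $T := L^{\perp}_{\Lambda}$; it is an even lattice of signature $(2, 19 - \rho)$, whose positive definite part is a genuine $2$-plane. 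The period domain
\begin{equation*}
\Omega_T := \{[\omega] \in \BP(T \otimes \C) : \omega \cdot \omega = 0, \; \omega \cdot \bar\omega > 0\}
\end{equation*}
is therefore nonempty and of positive dimension. For each $v \in \Lambda \setminus L$ the locus $\{[\omega] \in \Omega_T : \omega \cdot v = 0\}$ is a proper analytic subset, and choosing $\omega$ outside this countable union of subsets guarantees $\omega^{\perp} \cap \Lambda = L$.

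Given such an $\omega$, the surjectivity of the period map for K3 surfaces (Todorov--Looijenga--Siu) supplies a marked K3 surface $(X, \phi)$ with $\phi : H^2(X, \Z) \xrightarrow{\sim} \Lambda$ and $\phi(H^{2,0}(X)) = \C \omega$. By the Lefschetz $(1,1)$-theorem, $\phi$ identifies $S_X$ with $\omega^{\perp} \cap \Lambda = L$. Projectivity then follows from the hyperbolic signature: $L$ contains a class of positive self-intersection, and a K\"ahler K3 surface is projective whenever its Picard lattice contains such a class. The main delicate point is the genericity step, needed to rule out accidental $(1,1)$-classes from $\Lambda \setminus L$ enlarging the Picard lattice beyond $L$; the numerical bound $\rho \leq 9$ is precisely what makes the Nikulin embedding hypothesis hold and ensures $\Omega_T$ has positive dimension, so that the Baire-type avoidance argument succeeds.
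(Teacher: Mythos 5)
The paper does not prove this statement at all: it is quoted as a black-box result from Nikulin (the reference given is \cite{N2}, Theorem 1.14.4), so there is no internal argument to compare yours against. Your proposal is the standard proof of this folklore statement and is essentially correct: primitive embedding of $L$ into the K3 lattice $\Lambda$ of signature $(3,19)$, a very general period point in $\Omega_T$ for $T=L^\perp_\Lambda$ avoiding the countably many hyperplanes $v^\perp$ with $v\in\Lambda\setminus L$, surjectivity of the period map, the Lefschetz $(1,1)$-theorem to identify $S_X$ with $\omega^\perp\cap\Lambda=L$, and projectivity from the existence of a class of positive square in a lattice of signature $(1,\rho)$. Two small points are worth tightening. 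First, the sufficient condition you quote for the primitive embedding should be attributed carefully: the relevant statements in Nikulin are Corollary 1.12.3/Theorem 1.12.4 (existence), not Theorem 1.14.4 (which concerns uniqueness of the primitive embedding); with $\rk(L)=1+\rho\le 10$ and $\ell(A(L))\le\rk(L)$ the hypotheses of either form are satisfied, so the conclusion stands, but the inequality you write is the one from the corollary on existence, and your parenthetical "$\rho\le 9$ is precisely what makes the hypothesis hold" is slightly overstated --- the rank bound is sufficient, not sharp for this criterion. Second, for $v\in\Lambda\setminus L$ the locus $\{\omega\cdot v=0\}$ is proper in $\Omega_T$ because the orthogonal projection of $v$ to $T\otimes\Q$ is nonzero (this uses that $L$ is primitively embedded, so $L=(L^{\perp})^{\perp}$); you should say this explicitly, since it is the one place where primitivity of the embedding is actually used. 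Neither point is a genuine gap.
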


An embedding $S\hookrightarrow L$ of lattices is called \textit{primitive} if $L/S$ is free.
\begin{proposition} \label{Glueing of lattices} (\cite{N2}, Proposition 1.6.1.) A primitive embedding of an even lattice $S$ into an even unimodular lattice $L$, in which the orthogonal complement of $S$ is isomorphic to $K$, is determined by an isomorphism $\gamma:A(S)\xrightarrow{\sim} A(K)$ for which $q_K\circ \gamma=-q_S$.
\end{proposition}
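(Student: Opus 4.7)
The plan is to establish a bijection between primitive embeddings $S \hookrightarrow L$ into even unimodular $L$ with orthogonal complement $K$, and anti-isometries $\gamma: A(S) \xrightarrow{\sim} A(K)$ of discriminant forms, by reading $\gamma$ off from the finite group $H := L/(S \oplus K)$. Unimodularity gives $L = L^*$, which together with $S \oplus K \subseteq L$ produces the chain $S \oplus K \subseteq L \subseteq S^* \oplus K^*$, placing $H$ inside $(S^* \oplus K^*)/(S \oplus K) = A(S) \oplus A(K)$. I must show $H$ is the graph of such a $\gamma$, and conversely that every such $\gamma$ arises in this way.

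For the forward direction, I first observe $H \cap (A(S) \oplus 0) = 0$: if $(\bar s, 0) \in H$ is represented by $x = (s, k) \in L$ with $k \in K$, then $x - k = s$ lies in $L \cap (S \otimes \Q) = S$ by primitivity of $S$ in $L$, whence $\bar s = 0$. The analogous fact for $A(K)$ holds because $K = S^\perp$ is itself primitive in $L$. Consequently $H$ is the graph of an injective homomorphism $\gamma$ from its projection $H_S \subseteq A(S)$ onto its projection $H_K \subseteq A(K)$. Next I translate $L = L^*$ modulo $S \oplus K$ into $H = H^\perp$ with respect to the form $b_S \oplus b_K$ on $A(S) \oplus A(K)$. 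Combined with the preceding step this forces $H_S^\perp = 0$ in $A(S)$, and non-degeneracy of $b_S$ on $A(S)$ then gives $H_S = A(S)$; symmetrically $H_K = A(K)$, so $\gamma$ is a group isomorphism. Finally, evenness of $L$ means $(s + k) \cdot (s + k) \in 2\Z$ for every $(s, k) \in L$, which modulo $2\Z$ reads $q_S(\bar s) + q_K(\gamma(\bar s)) = 0$, i.e., $q_K \circ \gamma = -q_S$.

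For the converse, given an anti-isometry $\gamma: A(S) \xrightarrow{\sim} A(K)$ I would define $L_\gamma \subseteq S^* \oplus K^*$ as the preimage of the graph of $\gamma$ under the quotient map. Then $[L_\gamma : S \oplus K] = |A(S)| = |A(K)|$, so the identity $[L_\gamma : S \oplus K]^2 \cdot \disc(L_\gamma) = \disc(S) \cdot \disc(K)$ forces $\disc(L_\gamma) = \pm 1$; the anti-isometry condition guarantees $(s + k) \cdot (s + k) \in 2\Z$ for every $(s, k) \in L_\gamma$, so $L_\gamma$ is even; and $S$ is primitive in $L_\gamma$ with $S^\perp = K$ by construction. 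The main obstacle is really notational rather than conceptual: one must be scrupulous about which of the pairings is in play at each step (the $\Z$-valued form on $L$, its $\Q$-valued extension to $S^* \oplus K^*$, the induced $\Q/\Z$-valued $b_S \oplus b_K$, and the $\Q/2\Z$-valued quadratic refinement $q_S \oplus q_K$) and translate the condition $x \in L^*$ faithfully into an orthogonality statement inside the discriminant group. Once this dictionary is fixed, each of the steps above reduces to elementary manipulations with finite abelian groups.
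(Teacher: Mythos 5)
The paper states this proposition as a quoted result from Nikulin \cite{N2}, Proposition 1.6.1, and gives no proof of its own, so there is no internal argument to compare against. Your proof is the standard overlattice/gluing argument---identifying $H=L/(S\oplus K)$ inside $A(S)\oplus A(K)$ as the graph of an anti-isometry via primitivity, unimodularity ($H=H^\perp$) and evenness, and then reversing the construction---and it is correct; it is essentially the proof in the cited reference. The only step worth making explicit in the converse is that integrality of the bilinear form on $L_\gamma$ follows from the evenness of the quadratic form by polarization, $b(x,y)=\tfrac{1}{2}\left((x+y)^2-x^2-y^2\right)\in\Z$.
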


\begin{lemma} \label{criterion} Let $L$ be a non-degenerate even lattice of rank $n$. For $g\in O(L)$ and $\epsilon\in \{\pm 1\}$, $g$ acts on $A(L)$ as $\epsilon\cdot \id$ if and only if $(g-\epsilon\cdot I_n)\cdot Q_L^{-1}$ is an integer matrix, where $Q_L$ is the intersection matrix of $L$.
\end{lemma}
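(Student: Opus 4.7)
The plan is a direct coordinate translation of the definition of the action of $O(L)$ on $A(L)$. Fix a basis $e_1,\ldots,e_n$ of $L$ with respect to which the bilinear form has matrix $Q_L$ and the isometry is represented by an integer matrix, which I will still denote $g$. The key preliminary step is to realize the dual lattice inside $L\otimes\Q$: if $v=\sum v_i e_i$ has rational coordinate column $(v_1,\ldots,v_n)^T$, then since $Q_L$ is symmetric one has $b(v,e_j)=(Q_L v)_j$, so the condition $b(v,\ell)\in \Z$ for every $\ell\in L$ becomes $Q_L v \in \Z^n$. Therefore in these coordinates $L^* = Q_L^{-1}\Z^n$.

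Next I would observe that $g$ extends $\Q$-linearly to $L\otimes \Q$ and preserves $L^*$ because it preserves $b$; in the chosen coordinates it acts on both $L$ and $L^*$ as multiplication by the same matrix $g$. The condition that $g$ induces $\epsilon\cdot \id$ on $A(L) = L^*/L$ is then equivalent to $(g-\epsilon I_n)v \in \Z^n$ for every $v \in L^* = Q_L^{-1}\Z^n$. Parametrizing $L^*$ by $v = Q_L^{-1} w$ with $w\in \Z^n$, this becomes
\begin{equation*}
(g-\epsilon I_n) Q_L^{-1} w \in \Z^n \quad \text{for all } w \in \Z^n,
\end{equation*}
which is manifestly equivalent to $(g-\epsilon I_n)Q_L^{-1}$ having integer entries.

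There is no real obstacle beyond keeping the conventions straight: the entire lemma is a matrix reformulation, the only nontrivial input being the identification $L^* = Q_L^{-1}\Z^n$, after which both directions follow by running the same equivalence forwards and backwards.
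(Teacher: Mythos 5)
Your proposal is correct and follows essentially the same route as the paper: both identify $L^*$ in coordinates as $Q_L^{-1}\Z^n$ (the paper phrases this as "$L^*$ is generated by the columns of $Q_L^{-1}$") and then translate the condition that $g-\epsilon\,\id$ maps $L^*$ into $L$ directly into integrality of $(g-\epsilon I_n)Q_L^{-1}$. Your write-up merely spells out the identification of the dual lattice and the invariance of $L^*$ under $g$ in more detail than the paper does.
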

\begin{proof}  Since $L^*$ is generated by the columns $\{v_i\}_{i=0}^n$ of $Q_L^{-1}$, $g\mid_{L^*/L}=\pm \id\mid_{L^*/L}$ if and only if $(g\mp \id)(v_i)$ are in $L$ for all $i$. This is equivalent to  $(g-\epsilon\cdot I_n)\cdot Q_L^{-1}$ is an integer matrix.
\end{proof}

\section{Proof} \label{Proof}
Let $L$ be an even lattice of signature $(1,1)$ with intersection matrix given by \eqref{Gram matrix}
%\begin{equation}
%\begin{pmatrix} 2a&b\\b&2c \end{pmatrix},
%\end{equation}
with $d:=-\disc(L)=b^2-4ac>0$. By Theorem \ref{Existence}, there is a K3 surface $X$ whose Picard lattice $S_X\cong L$. %For a K3 surface of rank two Picard lattice, the automorphism group $\Aut(X)$ is either trivial, $\Z_2$, $\Z$, or $\D_{\infty}$ infinite dihedral group (\cite{GLP}).  
 It is known that $d$ is a square number if and only if there is a $D\in S_X$ with $D^2=0$. Suppose that $d$ is not a square number. Moreover, we assume that $S_X$ has no $D$ whose self-intersection $D^2=-2$. Hence we have that $\Aut(X)$ is infinite (cf. Theorem \ref{Infinite automorphism group}). 
%Moreover, in this case, the ample cone is the same with positive cone.

We consider $A_k=\{D=(x,y)\in S_X\mid D^2=2ax^2+2bxy+2cy^2=2k\}$ for some $k\in \Z$ such that $A_k\neq \emptyset$.
Then $A_k$ consists of $(A_k)_{\pm}=\{(x,y)\mid x=\frac{-by\pm z}{2a} \text{ such that } z^2-dy^2=4ak\}$.
%\begin{lemma} \label{Divisors}
%Each $(A_k)_+$ or $(A_k)_-$ is isomorphic to $\Z$.
%\end{lemma}
%\begin{proof}
%This follows from Theorem \ref{solutions of Pell equation} and Theorem \ref{generalized Pell equation}. More precisely, since $A_k\neq \emptyset$, we have that $z_0^2-dy_0^2=4ak$ for some $z_0, y_0$. Then as in the proof of Theorem \ref{generalized Pell equation} a solution $(z',y')$ of the Pell equation $z^2-dy^2=4ak$ can be obtained by $z'+y'\sqrt{d}=(u_n+v_n\sqrt{d})(z_0+y_0\sqrt{d})$, where $u_n+v_n\sqrt{d}=(u_1+v_1\sqrt{d})^n$ for some $n\in \Z$ with $u_1^2-dv_1^2=1$. Now the lemma follows from the correspondence between $(x_n,y_n)$ and $n$, where $x_n=\frac{-by_n+z_n}{2a}$ for $(A_k)_+$ and $x_n=\frac{-by_n-z_n}{2a}$ for $(A_k)_-$.
%\end{proof}
For $(x_0,y_0)\in (A_k)_{\pm}$ with $x_0=\frac{-by_0\pm z_0}{2a}$ and $z_0^2-dy_0^2=4ak$, we have that $(x_n,y_n)\in (A_k)_{\pm}$ with $x_n=\frac{-by_n\pm z_n}{2a}$ and $z_n^2-dy_n^2=4ak$ by the proof of Theorem \ref{generalized Pell equation}. Here $(z_n,y_n)$ is obtained by $z_n+y_n\sqrt{d}=(u_n+v_n\sqrt{d})(z_0+y_0\sqrt{d})$ for some solution $(u_n,v_n)$ of Pell equation $u^2-dv^2=1$. In other words, $(z_n,y_n)$ is a Pell multiple of $(z_0,y_0)$. 

%For a general Pell equation $u^2-dv^2=m$ with $m\neq 0$, it is known that there are finite number of orbits of solutions under the group $(\Z[\sqrt{d}])^*$.

\begin{lemma}\label{Limit}
For these $(x_n,y_n)\in (A_k)_{\pm}$ (resp.), $\frac{x_n}{y_n}$ converges to $\frac{-b\pm \sqrt{d}}{2a}$ (resp.) as $n$ increases.
\end{lemma}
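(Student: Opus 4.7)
The plan is to reduce the claim to showing $z_n/y_n \to \sqrt{d}$. Writing
$$\frac{x_n}{y_n} = \frac{-b}{2a} \pm \frac{1}{2a}\cdot \frac{z_n}{y_n}$$
makes this reduction immediate, so everything comes down to the ratio of the Pell variables.

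To establish $z_n/y_n \to \sqrt{d}$, I would use the explicit form of the Pell-multiple recursion together with its Galois conjugate. Applying $\sqrt{d}\mapsto -\sqrt{d}$ to $z_n + y_n\sqrt{d}=(u_n+v_n\sqrt{d})(z_0+y_0\sqrt{d})$ gives $z_n - y_n\sqrt{d}=(u_n-v_n\sqrt{d})(z_0-y_0\sqrt{d})$. Let $\varepsilon := u_1 + v_1\sqrt{d}>1$ be the fundamental unit of $u^2-dv^2=1$; by Theorem \ref{solutions of Pell equation} one has $u_n + v_n\sqrt{d}=\varepsilon^n$ and $u_n - v_n\sqrt{d}=\varepsilon^{-n}$. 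Setting $\xi_\pm := z_0 \pm y_0\sqrt{d}$ and solving the two displayed equations for $z_n$ and $y_n$ yields
$$z_n = \tfrac{1}{2}\bigl(\varepsilon^n\xi_+ + \varepsilon^{-n}\xi_-\bigr),\qquad y_n = \tfrac{1}{2\sqrt{d}}\bigl(\varepsilon^n\xi_+ - \varepsilon^{-n}\xi_-\bigr).$$
Factoring out the dominant term $\varepsilon^n\xi_+$ from numerator and denominator then produces
$$\frac{z_n}{y_n}=\sqrt{d}\cdot\frac{1+\varepsilon^{-2n}\,\xi_-/\xi_+}{1-\varepsilon^{-2n}\,\xi_-/\xi_+}\longrightarrow \sqrt{d}$$
as $n\to\infty$, since $\varepsilon>1$ forces $\varepsilon^{-2n}\to 0$. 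Substituting back into the first display gives $x_n/y_n\to(-b\pm\sqrt{d})/(2a)$, and one checks that the $\pm$ sign is preserved along the recursion so that it matches the branch $(A_k)_\pm$ to which $(x_0,y_0)$ belongs.

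The only delicate point, and hence the main (if minor) obstacle, is the non-vanishing $\xi_+\neq 0$. Since $d$ is non-square, $\sqrt{d}$ is irrational, so $z_0 + y_0\sqrt{d}=0$ forces $z_0=y_0=0$; but then $x_0=(-b\cdot 0\pm 0)/(2a)=0$, so $(x_0,y_0)$ would be the zero element, which is excluded from $A_k$ under the standing assumption $k\in \Z\setminus\{0,-1\}$. With this edge case ruled out, the remainder of the argument is the routine geometric-growth estimate above, dominated by the leading term $\varepsilon^n\xi_+$.
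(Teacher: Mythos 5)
Your proof is correct and takes essentially the same route as the paper: both reduce the claim to showing $z_n/y_n\to\sqrt{d}$ via the Pell-multiple formula, the paper by dividing through by $v_n$ and citing $u_n/v_n\to\sqrt{d}$, you by the conjugate/fundamental-unit decomposition $z_n\pm y_n\sqrt{d}=\varepsilon^{\pm n}(z_0\pm y_0\sqrt{d})$. If anything, your version is slightly more careful, since it makes the geometric rate $\varepsilon^{-2n}$ explicit and verifies the nonvanishing of $z_0+y_0\sqrt{d}$ (using $k\neq 0$ and the irrationality of $\sqrt{d}$), a point the paper's limit computation uses implicitly (its denominator is $z_0+\sqrt{d}\,y_0$) without comment.
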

\begin{proof} By the proof of Theorem \ref{generalized Pell equation}, we have $(x_n,y_n)$ with $x_n=\frac{-by_n+z_n}{2a}$, where $(z_n,y_n)$ is obtained by $z_n+y_n\sqrt{d}=(u_n+v_n\sqrt{d})(z_0+y_0\sqrt{d})$ with $u_n^2-dv_n^2=1$. Now $\frac{z_n}{y_n}=\frac{u_nz_0+dv_ny_0}{v_nz_0+u_ny_0}=\frac{z_0\frac{u_n}{v_n}+dy_0}{z_0+\frac{u_n}{v_n}y_0}$ converges to $\sqrt{d}$ as $\frac{u_n}{v_n}$ converges to $\sqrt{d}$ as $n$ increases. Now for $(x_n,y_n)\in (A_k)_{+}$, $\frac{x_n}{y_n}=\frac{-b+z_n/y_n}{2a}$ converges to $\frac{-b+\sqrt{d}}{2a}$ as $n$ increases. 

Moreover, by Theorem \ref{orbits of general Pell equation}, we have only a finite set of such $(z_0,y_0)$ and for each of which we have the same result. Similarly this also holds for $(x_n,y_n)\in (A_k)_{-}$.
\end{proof}

\subsection{Proof of Theorem \ref{theorem1}}\label{Proof of Theorem 1}
Let $g$ be an automorphism of infinite order of $X$ whose action on $S_X$ is given by \eqref{Automorphism}.
%\begin{equation}
%g^*:=g^*\vert_{S_X}=\begin{pmatrix} \alpha&\beta\\ \gamma&\delta \end{pmatrix}\in SL(2,\Z). 
%\end{equation}
Then $A_k\neq \emptyset$ for some $k\neq 0,-2$ (cf. Theorem \ref{Infinite automorphism group}). For $(x_0,y_0)\in A_k$, let $(x_n,y_n)=g^{n*}(x_0,y_0)$, where $g^{n*}$ is the induce isometry on $S_X$. Hence we have that $(x_n,y_n)\in A_k$ and by Lemma \ref{Limit}, $\frac{x_n}{y_n}$ converges to $\frac{-b+\sqrt{d}}{2a}$. Recall that $(x_n,y_n)\in \Z^2$.

For $(x_n,y_n)=g^{n*}(x_0,y_0)$, $\frac{x_n}{y_n}$ converges to $\frac{x}{y}=\frac{-b+\sqrt{d}}{2a}$ as $n$ increases, hence the ratio $\frac{x}{y}$ indicates the direction of eigenvector of $g^*$. Hence for $(x,y)$ such that $\frac{x_n}{y_n}$ converges to $\frac{x}{y}$, $g*$ preserves the ratio, i.e., for %Now for $(x_n,y_n)\in (A_k)_+$, consider $(x,y)\in \R^2$ such that $\frac{x_n}{y_n}$ converges to $\frac{x}{y}$. Then by Lemma \ref{Limit},
\begin{equation}\label{g action}
 \begin{pmatrix} \alpha&\beta\\ \gamma&\delta \end{pmatrix}\begin{pmatrix} x\\ y \end{pmatrix}=\begin{pmatrix} \alpha x+\beta y\\ \gamma x+\delta y \end{pmatrix},
\end{equation}
we have that 
\begin{equation}\label{Ratio}
\frac{\alpha x+\beta y}{\gamma x+\delta y}=\frac{x}{y}=\frac{-b+\sqrt{d}}{2a} \hspace{3mm} \text{ or } \hspace{3mm}\frac{\alpha\frac{-b+\sqrt{d}}{2a}+\beta }{\gamma\frac{-b+\sqrt{d}}{2a}+\delta }=\frac{-b+\sqrt{d}}{2a}.
\end{equation}
Then we have that 
\begin{equation}
\alpha(-b+\sqrt{d})+2a\beta=\frac{\gamma(2b^2-4ac-2b\sqrt{d})}{2a}+\delta(-b+\sqrt{d}).
\end{equation}
Now since this is an element of $\Q[\sqrt{d}]$, we have that 
\begin{equation}\label{two conditions}
\alpha+\frac{b}{a}\gamma-\delta=0 \hspace{3mm} \text{ and }\hspace{3mm} a\beta+c\gamma=0.
\end{equation}  
This gives the first two conditions in \eqref{Relations1}.
%\begin{equation}\label{Relations}
%\gamma=-\frac{a}{c}\beta \text{  , } \delta=\alpha-\frac{b}{c}\beta \text{  and  } \alpha^2-\frac{b}{c}\alpha\beta+\frac{a}{c}\beta^2=1.
%\end{equation}
Since we assumed that $d$ is not a square number, $c\neq 0$. 
Moreover, since $g^*$ is an isometry of $S_X$, we have that $g^{*tr}Q_{S_X}g^*=Q_{S_X}$, where $Q_{S_X}$ is the intersection matrix of $S_X$ as in \eqref{Gram matrix}. This and condtions in \eqref{two conditions} give the last identity in \eqref{Relations1}. This proves the first part of the theorem.%Hence $g^*$ in \eqref{Automorphism} with relations \eqref{Relations} is a necessary condition to be an automorphism of infinite order of $X$. 
 
Next, we will see that the generator of infinite order is a power of such a minimal isometry in the sense that the minimal isometry is associated with the minimal positive solution of some Pell equation. %Hence the spectral radius of $g^*$ is a power of the spectral radius of the minimal isometry. 

From the last condition in \eqref{Relations1}, we have that $\alpha=\frac{b\frac{\beta}{c}+z}{2}$ with $(z, \frac{\beta}{c})$ is a solution of the Pell equation 
\begin{equation}\label{d'-Pell equation}
u^2-dv^2=4.
\end{equation}
Let $(z_1,\frac{\beta_1}{c})$ be the minimal positive solution of \eqref{d'-Pell equation}.
Then by Theorem \ref{4-Pell equation}, any solution $(z_k,\frac{\beta_k}{c})$ is given by a power of $\frac{z_1+\frac{\beta_1}{c}\sqrt{d}}{2}$, that is, $\frac{z_k+\frac{\beta_k}{c}\sqrt{d}}{2}=(\frac{z_1+\frac{\beta_1}{c}\sqrt{d}}{2})^k$ with $k\in \Z$. For $\alpha_1=\frac{b\frac{\beta_1}{c}+z_1}{2}$, let
\begin{equation}\label{h1}
h=\begin{pmatrix} \alpha_1&\beta_1 \\-\frac{a}{c}\beta_1 &\alpha_1-\frac{b}{c}\beta_1 \end{pmatrix}.
\end{equation}

\begin{claim}\label{claim}
Any matrix in \eqref{Automorphism} satisfying the relations \eqref{Relations1} is a power of $h$.
\end{claim}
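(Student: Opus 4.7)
The plan is to interpret the matrices satisfying \eqref{Relations1} as norm-one elements of a commutative subring of $\Q(\sqrt{d})$, so that Theorem \ref{4-Pell equation} immediately gives the desired description.

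Introduce $\theta := \frac{-b + \sqrt{d}}{2c}$; this is precisely the slope $x/y$ of the eigenvector from Lemma \ref{Limit}, and a direct check shows it satisfies $\theta^2 = -\frac{b}{c}\theta - \frac{a}{c}$. For a matrix $g$ as in \eqref{Automorphism} satisfying \eqref{Relations1} (hence determined by its top row $(\alpha,\beta)$), set
\begin{equation*}
\Phi(g) := \alpha + \beta\theta \in \Q(\sqrt{d})^{\times}.
\end{equation*}
Since $\theta \notin \Q$, $\Phi$ is injective as a function of $(\alpha,\beta)$. I would next verify closure and multiplicativity by a direct computation: the matrix product of two such matrices with parameters $(\alpha_i, \beta_i)$ is again of the shape in \eqref{isometry}, with new parameters
\begin{equation*}
\alpha' = \alpha_1\alpha_2 - \tfrac{a}{c}\beta_1\beta_2, \qquad \beta' = \alpha_1\beta_2 + \alpha_2\beta_1 - \tfrac{b}{c}\beta_1\beta_2,
\end{equation*}
which are exactly the coefficients of $(\alpha_1 + \beta_1\theta)(\alpha_2 + \beta_2\theta)$ after substituting $\theta^2 = -\frac{b}{c}\theta - \frac{a}{c}$. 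Thus $\Phi$ is a multiplicative homomorphism from the set of matrices in question into $\Q(\sqrt{d})^{\times}$.

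The third relation of \eqref{Relations1} now reads $\Phi(g)\,\overline{\Phi(g)} = \alpha^2 - \frac{b}{c}\alpha\beta + \frac{a}{c}\beta^2 = 1$, so $\Phi(g)$ has norm one over $\Q$. Setting $z := 2\alpha - \frac{b}{c}\beta$ and $v := \frac{\beta}{c}$ rewrites $\Phi(g) = \frac{z + v\sqrt{d}}{2}$ with $(z,v)$ an integer solution of $u^2 - dv^2 = 4$ (integrality of $z$ and $v$ follows from the integrality of the entries of $g$ combined with $d \equiv b^2 \pmod{4}$). Theorem \ref{4-Pell equation} then forces
\begin{equation*}
\tfrac{z + v\sqrt{d}}{2} = \pm \bigl(\tfrac{z_1 + v_1\sqrt{d}}{2}\bigr)^n = \pm\Phi(h)^n
\end{equation*}
for some $n \in \Z$, and the injectivity and multiplicativity of $\Phi$ yield $g = \pm h^n$.

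To eliminate the negative sign I invoke that $g$ is induced by an automorphism of $X$, so $g^*$ preserves the ample cone of $S_X$. The two isometries $h^n$ and $-h^n = (-\id)\cdot h^n$ differ by the involution $-\id$, which swaps the two components of the positive cone; at most one of $h^n, -h^n$ can preserve the ample cone, and that one must be $g$. Hence $g = h^n$, proving the claim. The only substantive calculation is the multiplicativity of $\Phi$, and the expected main obstacle is to bookkeep that verification together with the integrality translation $(\alpha,\beta) \leftrightarrow (z,v)$; after that, everything is a direct consequence of Theorem \ref{4-Pell equation}.
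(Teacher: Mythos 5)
Your route is essentially the paper's own: the identification of a matrix satisfying \eqref{Relations1} with the norm-one element $\frac{z+v\sqrt{d}}{2}$, $z=2\alpha-\frac{b}{c}\beta$, $v=\frac{\beta}{c}$, the appeal to Theorem \ref{4-Pell equation}, and your multiplication law $\alpha'=\alpha_1\alpha_2-\frac{a}{c}\beta_1\beta_2$, $\beta'=\alpha_1\beta_2+\alpha_2\beta_1-\frac{b}{c}\beta_1\beta_2$ (exactly the computation \eqref{power of h}) all appear in Section \ref{Proof of Theorem 1}; your $\Phi$ just repackages that induction as multiplicativity, and you dispose of the sign via the ample cone as the paper does via Torelli. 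If anything, you make explicit the direction the Claim actually asserts (every matrix satisfying \eqref{Relations1} yields a Pell solution, hence is $\pm h^{n}$), whereas the paper's written proof only verifies that powers of $h$ satisfy \eqref{Relations1}.

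Two points to tighten. First, your parenthetical that integrality of $v=\beta/c$ ``follows from the integrality of the entries of $g$ combined with $d\equiv b^2 \pmod 4$'' is not a proof: what is needed is $c\mid\beta$, and the relations only give $c\mid a\beta$ and $c\mid b\beta$, which do not force $c\mid\beta$ in general. This is the same unproved assertion the paper itself makes in the sentence preceding the Claim, so you are faithful to the paper's argument, but the reason you offer does not establish it. Second, your last step shows only that $g$ is whichever of $h^{n}$, $-h^{n}$ preserves the ample cone; to conclude $g=h^{n}$ you also need that $h^{n}$ (not $-h^{n}$) preserves the components of the positive cone, which holds because the trace of $h$ equals $z_1>2$, so $h$ has positive real eigenvalues, while $-h^{n}$ interchanges the two components. (Also, cosmetically, your $\theta=\frac{-b+\sqrt{d}}{2c}$ is a root of $ct^2+bt+a$ and is not the slope $\frac{-b+\sqrt{d}}{2a}$ of Lemma \ref{Limit}; this does not affect the computation.)
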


\begin{proof}
For the induction argument, let $(\alpha_{k},\beta_k)$ be the first row of $h^k$, i.e., %So assume that for $k\in \Z$,
\begin{equation}\label{h^k}
h^k=\begin{pmatrix} \alpha_k&\beta_k \\-\frac{a}{c}\beta_k &\alpha_k-\frac{b}{c}\beta_k \end{pmatrix}.
\end{equation}
Then 
\begin{equation}\label{power of h}
h^{k+1}=\begin{pmatrix} \alpha_1\alpha_k-\frac{a}{c}\beta_1\beta_k&\alpha_1\beta_k+\alpha_k\beta_1-\frac{b}{c}\beta_1\beta_k \\ \frac{a}{c}(-\alpha_k\beta_1-\alpha_1\beta_k+\frac{b}{c}\beta_1\beta_k) & \alpha_1\alpha_k-\frac{a}{c}\beta_1\beta_k+\frac{b}{c}(-\alpha_1\beta_k-\alpha_k\beta_1+\frac{b}{c}\beta_1\beta_k) \end{pmatrix}.
\end{equation} 
It can be easily shown that $\alpha_{k+1}=\alpha_1\alpha_k-\frac{a}{c}\beta_1\beta_k$ and $\beta_{k+1}=\alpha_1\beta_k+\alpha_k\beta_1-\frac{b}{c}\beta_1\beta_k$ and hence $h^{k+1}$ satisfies \eqref{Relations1}. Hence, the automorphism $g$ of infinite order of $X$ is some power of $h$ preserving the ample cone (or positive cone) of $X$ by Torelli theorem. This proves Theorem \ref{theorem1}.
\end{proof}

%Since we assumed that $S_X$ has no divisor whose intersection number is $0$ or $-2$, the ample cone of $X$ is the positive cone. In order $g$ to be 
% Hence we have the following 
 
%\begin{theorem}
%For a K3 surface $X$ with Picard lattice $S_X$ with intersection matrix \eqref{Gram matrix}, an automorphism $g$ of infinite order acting on $S_X$ as in \eqref{Automorphism} satisfies \eqref{Relations}. Moreover, $g^*|S_X$ is a power of $h$ in \eqref{h1}, where $(\alpha_1,\beta_1)$ is the minimal positive solution of the Pell equation \eqref{d'-Pell equation}.
%\end{theorem} 

%Moreover if we multiply $h^{-1}$, we get the case $\alpha_k=\frac{\frac{b}{c}\beta_k-z_k}{2}$.
\begin{remark} Note that for $\alpha_k=\frac{\frac{b}{c}\beta_k-z_k}{2}$, the corresponding isometry will reflect the positive cone and the negative cone.
\end{remark}

\subsection{Proof of Theorem \ref{theorem2}}\label{Proof of Theorem 2}
In this section, we find some conditions on the involutions if we have $\Aut(X)\cong \Z_2\ast \Z_2$. 
%With a given automorphism $g$ of infinite order, we can find anti-symplectic involutions $\sigma$ and $\tau$.

\begin{definition}(\cite{GLP}, Section 3.2.) \label{Ambiguous}
A lattice $L$ is \textit{ambiguous} if $L$ admits an isometry $P$ with $\det P=-1$.
\end{definition}

\begin{theorem}(\cite{GLP}, Corollary 1.)\label{Infinite automorphism group}
For $X$ a K3 surface with Picard number $2$ the group $\Aut(X)$ is finite precisely when the Picard lattice $S_X$ contains divisors $L$ with $L^2=0$ or with $L^2=-2$. If $S_X$ does not contain such divisors and if moreover $S_X$ is not ambiguous, then $\Aut(X)$ is infinite cyclic, but if $S_X$ is ambiguous, then $\Aut(X)$ is either infinite cyclic or the infinite dihedral group.
\end{theorem}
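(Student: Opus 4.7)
The plan is to combine the global Torelli theorem for K3 surfaces with the classical theory of indefinite binary quadratic forms of rank $2$. By the strong Torelli theorem, $\Aut(X)$ injects, with finite kernel (an automorphism acting trivially on $S_X$ fixes a polarization), into the subgroup of $O^+(S_X)$ consisting of isometries that preserve the ample cone and that extend, together with some isometry of the transcendental lattice $T_X$, to a Hodge isometry of $H^2(X,\Z)$. The problem therefore reduces to studying which isometries of the rank-$2$ Picard lattice preserve the ample cone, and which of those lift geometrically.

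For the forward implication, I would show that the existence of a class $L\in S_X$ with $L^2=0$ or $L^2=-2$ forces the stabilizer of the ample cone in $O^+(S_X)$ to be finite. If $L^2=0$, then $d=b^2-4ac$ is a perfect square and the positive cone is bounded by exactly two rational isotropic rays; any element of $O^+(S_X)$ must permute these rays and send the corresponding primitive isotropic lattice vectors to themselves up to sign, so in a basis of such vectors the Gram matrix takes the hyperbolic form $\begin{pmatrix} 0 & n \\ n & 0 \end{pmatrix}$ and a direct calculation gives $|O^+(S_X)|\le 4$. If instead $L^2=-2$, the entire orbit of $L$ under the infinite cyclic group $O^+(S_X)\cap SL_2(\Z)$ consists of $(-2)$-classes; by Riemann--Roch on $X$, either $L$ or $-L$ is effective, so $L^\perp$ is an honest wall of the ample cone. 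Infinitely many such walls cut the positive cone into Weyl chambers, the ample cone is a single bounded chamber, and its stabilizer in $O^+(S_X)$ has order at most $2$ (swapping the two bounding walls).

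For the converse, assume $d$ is not a square and $S_X$ admits no $(-2)$-class, so that the ample cone coincides with the positive cone. The determinant $+1$, positive-cone-preserving subgroup of $O(S_X)$ is identified, via the classical correspondence for indefinite binary quadratic forms, with a group of units in the quadratic order $\Z[\sqrt{d}]$, and by Theorem \ref{4-Pell equation} it is infinite cyclic, generated by the matrix $h$ of \eqref{isometry} coming from the minimal positive solution of $x^2-dy^2=4$. The full group $O^+(S_X)$ is then infinite cyclic when $S_X$ is not ambiguous, and infinite dihedral when it is (any determinant $-1$ isometry conjugates $h$ to $h^{-1}$). To pass from $O^+(S_X)$ down to $\Aut(X)$, I would invoke Lemma \ref{criterion} together with Proposition \ref{Glueing of lattices}: whenever an isometry $\phi$ of $S_X$ acts on the discriminant group $A(S_X)$ as $\pm\id$, it glues with $\pm\id\in O(T_X)$, which automatically preserves $H^{2,0}(X)$ up to sign, to a Hodge isometry of $H^2(X,\Z)$, which then lifts by Torelli to an automorphism of $X$ preserving the ample cone.

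The hardest part is the lifting step in the ambiguous case. A suitable power of the cyclic generator $h$ always acts as $\id$ on the finite group $A(S_X)$ and so yields an infinite-cyclic subgroup of $\Aut(X)$ unconditionally. However, a determinant $-1$ generator of $O^+(S_X)$ need not glue to any Hodge isometry, because the matching $\psi\in O(T_X)$ required on the discriminant group may fail to preserve $H^{2,0}(X)$; this is precisely what produces the stated dichotomy ``infinite cyclic or infinite dihedral'' in the ambiguous case rather than always the dihedral group. A subtler geometric input in the forward direction is using Riemann--Roch on $X$ to upgrade a $(-2)$-lattice class into an effective divisor, so that $L^\perp$ is genuinely a wall of the ample cone and not merely a wall of the positive cone.
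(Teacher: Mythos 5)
First, a point of comparison: the paper itself contains no proof of this statement --- it is quoted verbatim from \cite{GLP} (Corollary 1) and used as a black box --- so there is no in-paper argument to measure your write-up against. That said, your sketch follows the same route as the cited source: reduce via the global Torelli theorem to the study of isometries of the rank-two Picard lattice preserving the ample cone, then invoke the classical theory of automorphs of indefinite binary quadratic forms (equivalently, Pell's equation) to identify the relevant isometry group as finite, infinite cyclic, or infinite dihedral. The case analysis in the forward direction ($L^2=0$ forces $d$ to be a square and pins $O(S_X)$ down to order at most $4$; $L^2=-2$ produces, via Riemann--Roch, infinitely many genuine walls and a bounded chamber with finite stabilizer) and the gluing argument in the converse (a power of $h$ acts trivially on $A(S_X)$, hence extends by $\id$ on $T_X$ to a Hodge isometry and descends by Torelli to an automorphism) are sound and are exactly the ingredients the rest of the paper relies on in Section \ref{Applications}.

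The one genuine gap is the kernel of $\Aut(X)\to O(S_X)$. You assert that $\Aut(X)$ ``injects, with finite kernel,'' which is self-contradictory as written, and finiteness of the kernel is not enough: an extension of $\Z$ by a nontrivial finite group need not be infinite cyclic (e.g.\ $\Z\times\Z_2$ is neither infinite cyclic nor infinite dihedral), so the exact isomorphism types claimed in the statement require the kernel to be trivial. This does hold under the hypotheses, but it needs an argument: a symplectic automorphism acting trivially on $S_X$ also acts trivially on $T_X$ (a finite-order Hodge isometry of the transcendental lattice fixing $H^{2,0}(X)$ is the identity), hence is the identity by Torelli; a non-symplectic automorphism acting trivially on $S_X$ would be purely non-symplectic with invariant lattice exactly $S_X$, and Nikulin's constraints on such invariant lattices (\cite{N1}) --- for instance, for an involution $S_X$ would have to be an even hyperbolic $2$-elementary lattice of rank two, all of which contain classes of square $0$ or $-2$ --- rule this out precisely when $S_X$ has no such classes. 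Without some such step your argument only shows that $\Aut(X)$ is commensurable with $\Z$ or with $\Z_2\ast\Z_2$, not that it is isomorphic to one of them.
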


\begin{proof}[Proof of Theorem \ref{theorem2}]
Suppose that $\sigma$ and $\tau$ are the generators of $\Z_2\ast \Z_2$. By Remark \ref{anti-symplectic only}, these are anti-symplectic involutions. Hence if we let $g=\sigma\circ \tau$, then $g$ is symplectic of infinite order. Let $\iota$ be an involution of $X$. Then $\iota=g^n\circ \sigma$ or $\tau\circ g^n$ for some $n$.
Let $\{v,w\}$ be the eigenvectors of $g^{n*}| S_X\otimes \R$ corresponding to eigenvalues $\rho$ and $1/\rho$ such that $g^{n*}v=\rho v$ and $g^{n*}w=\frac{1}{\rho}w$, where $\rho$ is the spectral radius of $g^{n*}|S_X$. 

Now since $g=\sigma\circ \tau$, for either $\iota=g^n\circ \sigma$ or $\tau\circ g^n$,
%\begin{equation}\label{sigma and tau}
%\tau\begin{pmatrix} \vert&\vert \\ v& w\\ \vert& \vert \end{pmatrix}=\sigma\circ g\begin{pmatrix} \vert&\vert \\ v& w\\ \vert& \vert \end{pmatrix}=\sigma\begin{pmatrix} \vert&\vert \\ \rho v& \frac{1}{\rho} w\\ \vert& \vert \end{pmatrix}=\begin{pmatrix} \vert&\vert \\ \rho \sigma(v)& \frac{1}{\rho}\sigma(w)\\ \vert& \vert \end{pmatrix} \text{ and }
%\end{equation}
%\begin{equation}
%\sigma\begin{pmatrix} \vert&\vert \\ v& w\\ \vert& \vert \end{pmatrix}=g\circ \tau\begin{pmatrix} \vert&\vert \\  v& w\\ \vert& \vert \end{pmatrix}=g\begin{pmatrix} \vert&\vert \\ \rho \sigma(v)& \frac{1}{\rho}\sigma(w)\\ \vert& \vert \end{pmatrix}=\begin{pmatrix} \vert&\vert \\ g(\rho \sigma(v))& g(\frac{1}{\rho} \sigma(w))\\ \vert& \vert \end{pmatrix},
%\end{equation}
$\iota(v)=rw$ and $\iota(w)=r^{-1} v$ for some $r\in \R\setminus \{0\}$. 
%Similarly $\tau(v)=\rho rw$ and $\tau(w)=(\rho r)^{-1}v$.  
If we let 
\begin{equation}\label{tau}
\iota=\begin{pmatrix}\alpha&\beta \\ \gamma&\delta\end{pmatrix},
\end{equation}
then as in \eqref{Ratio} this implies that for $v=(x,y)$ with $\frac{x}{y}=\frac{-b+\sqrt{d}}{2a}$, the ratio $\frac{x}{y}$ is interchanged with its conjugate  via $\iota$. %For example, for $v=(x,y)=(\frac{-b+\sqrt{d}}{2a},1)$ and $w=(x',y')=(\frac{-b-\sqrt{d}}{2a},1)$, 
Hence, we have that
%\begin{equation}\label{tau action}
 %\begin{pmatrix} p&q\\ r&s \end{pmatrix}\begin{pmatrix} x\\ y \end{pmatrix}=\begin{pmatrix} px+q y\\ rx+s y \end{pmatrix}
%\end{equation}
\begin{equation}\label{Ratio of tau}
\frac{\alpha\frac{-b\pm \sqrt{d}}{2a}+\beta}{\gamma\frac{-b\pm \sqrt{d}}{2a}+\delta}=\frac{-b\mp \sqrt{d}}{2a}.
\end{equation}
 This implies that 
\begin{equation}
\frac{-b\pm \sqrt{d}}{2a}\alpha+\beta=\frac{c}{a}\gamma+\frac{-b\mp \sqrt{d}}{2a}\delta.
\end{equation}
%and 
%\begin{equation}
%\frac{-b\mp \sqrt{d}}{2a}p+q=\frac{c}{a}r+\frac{-b\pm \sqrt{d}}{2a}s.
%\end{equation}
Thus we have that 
\begin{equation}\label{Involution condition 1}
\delta=-\alpha, \hspace{2mm}\gamma=-\frac{b}{c}\alpha+\frac{a}{c}\beta.
\end{equation}
Moreover, since $\det(\iota)=-1$, we also have that
\begin{equation}\label{Involution condition 2}
\alpha^2-\frac{b}{c}\alpha\beta+\frac{a}{c}\beta^2=1.
\end{equation}
%This argument also holds for $\sigma$.
\end{proof}
%\begin{remark}
%Conversely, suppose that we have an isometry $\tau$ of $S_X$ as in \eqref{tau}. In order $\tau$ to be an anti-symplectic involution of $X$ $\tau$ needs to act on $A(S_X)\cong A(T_X)$ as $-\id$. Hence $\tau$ satisfies Lemma \ref{criterion} for $\epsilon=-1$. In this case, $\tau$ can be extended to an isometry of $H^2(X,\Z)$ by defining $-\id$ on $T_X$. We also denote this isometry as $\tau$. Since $S_X$ has no divisors of self-intersection number $-2$, the ample cone is the same with the positive cone. Now by Torelli theorem, if $\tau$ preserves a divisor of positive self-intersection number, it extends to an anti-symplectic involution of $X$.  
%\end{proof}
%Hence we have the following theorem.
%\begin{theorem}
%For a K3 surface $X$ with Picard lattice $S_X$ with intersection matrix \eqref{Gram matrix}, an involution $\tau$ acting on $S_X$ as in \eqref{tau} satisfies \eqref{Involution condition 1} and \eqref{Involution condition 2}. Conversely, an isometry in \eqref{tau} satisfying \eqref{Involution condition 1} and \eqref{Involution condition 2} can be extended to an involution if it acts on $-\id$ on $S_X$ and preseves a positive cone. 
%\end{theorem} 

%\begin{remark}\label{p and q} Note that $(p,q)=(\alpha_k,\beta_k)$ for some $k$ in \eqref{h^k} since the equation in \eqref{Involution condition 2} and the last one in \eqref{Relations1} are the same.
%\end{remark}

\begin{remark}\label{a=c}
Consider the Picard lattice \eqref{Gram matrix} with $a=c$. In this case, an involution $\tau$ can be obtained from a power of $h$ by interchanging columns. In other words, if we let $E_{12}\in \Mat_{2\times 2}(\Z)$ be the matrix interchanging two columns, then $\tau:=\tau_m=h^mE_{12}$ for some $m$. Similarly $\sigma:=\sigma_l=h^lE_{12}$ for some $l$. Hence we have that $\sigma\circ \tau=g=h^k=h^lE_{12}h^mE_{12}=h^{l-m}$, i.e., $k=l-m$.
\end{remark}

%\begin{remark} 
%Note that by Lemma \ref{criterion} if $\sigma$ and $\tau$ are anti-symplectic involutions of $X$, then they act as $-\id$ on the discriminant group $A(S_X)(\cong A(T_X))$ of Picard lattice $S_X$.
%\end{remark}

\section{Applications} \label{Applications}
In this section, we apply our results to several examples.
\subsection{Example 1} In \cite{M}, Mori showed that there is a non-singular quartic surface $X$ in $\BP^3$ with a non-singular curve $C$ of degree $d$ and genus $g$ if and only if (1) $g=d^2/8+1$, or (2) $g<d^2/8$ and $(d,g)\neq (5,3)$. More generally, we refer \cite{K}.

Let $X$ be a quartic hypersurface in $\BP^3$ whose Picard lattice $S_X$ has the intersection matrix
\begin{equation}\label{genus 2}
\begin{pmatrix} 4& d\\ d&2g-2 \end{pmatrix}
\end{equation}
generated by $\{H=\mathcal{O}_X(1),C\}$. 
If $g=d^2/8+1$, then the discriminant of \eqref{genus 2} is zero. In this case, by Theorem \ref{Infinite automorphism group}, the automorphism group is finite. Hence we assume that $g<d^2/8$.

We consider a K3 surface $X$ whose Picard lattice $S_X$ has the following intersection matrix
\begin{equation}\label{Example 1}
Q_{S_X}=\begin{pmatrix} 4&2n\\2n&4 \end{pmatrix}
\end{equation} 
with $d:=-\disc(S_X)=4(n^2-4)>0$. Since $X$ has no divisors of self-intersection number $0$ or $-2$, $\Aut(X)$ is either $\Z$ or $\Z_2\ast  \Z_2$ by Theorem \ref{Infinite automorphism group}.

First, we determine the generator of automorphisms of infinite order in $\Aut(X)$.
By the relations \eqref{Relations1}, an automorphism $g$ of infinite order with $g^*\vert S_X$ in \eqref{Automorphism} satisfies $\gamma=-\beta, \delta=\alpha-n\beta$ and $\alpha^2-n\alpha\beta+\beta^2=1$. Now by Theorem \ref{theorem1}, $g^*\vert S_X$ is a power of $h$, where 
\begin{equation}\label{h}
h=\begin{pmatrix} n&1\\-1&0 \end{pmatrix}.
\end{equation} 
Indeed, we have the following.
%\begin{claim}\label{Claim}
%Any matrix in \eqref{Automorphism} satisfying the relations \eqref{Relations} is a power of $h$.
%\end{claim}

%\begin{proof}
%Let $(\alpha_{k},\beta_k)$ be the first row vector of $h^k$. Consider the Pell equation $z^2-(n^2-4)\beta^2=4$ with the minimal positive solution $(z,\beta)=(n,1)$. Then by Corollary \ref{4-Pell equation}, any solution $(z_k,\beta_k)$ is given by a power of $\frac{z_1+\beta_1\sqrt{d'}}{2}$, that is, $\frac{z_k+\beta_k\sqrt{d'}}{2}=(\frac{z_1+\beta_1\sqrt{d'}}{2})^k$ with $k\in \Z$, where $d'=d/4=n^2-4$. For such $(z_k,\beta_k)$, assume that $\alpha_k=\frac{n\beta_k+z_k}{2}$ and $(\alpha_k,\beta_k)$ satisfies the equation \eqref{Relations}.

%Now 
%\begin{equation}
%h\begin{pmatrix} \alpha_k&\beta_k \\-\beta_k &\alpha_k-n\beta_k \end{pmatrix}=\begin{pmatrix} n\alpha_k-\beta_k&\alpha_k \\-\alpha_k &-\beta_k \end{pmatrix}.
%\end{equation} 
%Then it is easily shown that $\alpha_{k+1}=n\alpha_k-\beta_k$ and $\beta_{k+1}=\alpha_k$. Now if we multiply $h^{-1}$, we get the case $\alpha_k=\frac{n\beta_k-z_k}{2}$.
%\end{proof}

\begin{proposition}\label{Example1} 
For  a K3 surface $X$ whose intersection matrix of Picard lattice $S_X$ is given in \eqref{Example 1} with $d=-\disc(S_X)=4(n^2-4)>0$, $\Aut(X)\cong \Z$. The generator is as follows:
\begin{enumerate}
\item if $n$ is even, $g^*=h^4$ is the generator of $\Z$ and symplectic.
\item if $n$ is odd ($\neq 3$), $g^*=h^6$ is the generator of $\Z$ and symplectic.
\item if $n=3$, $g^*=h^3$ is the generator of $\Z$ and anti-symplectic. 
\end{enumerate}
%Moreover, these are symplectic except $n=3$ and for $n=3$, it is anti-symplectic. 
%The Salem polynomials of $h^3$, $h^4$, and $h^6$ are $x^2-18x+1$, $x^2-((n^2-2)^2-2)x+1$ and $x^2-(n^2(n^2-3)^2-2)x+1$, respectively.
\end{proposition}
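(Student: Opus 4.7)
The plan is to establish three things: (a) the matrix $h=\begin{pmatrix} n & 1 \\ -1 & 0\end{pmatrix}$ is the minimal isometry coming out of Theorem \ref{theorem1} for the lattice \eqref{Example 1}, so that every infinite-order element of $\Aut(X)$ acts on $S_X$ as a power of $h$; (b) the smallest positive $k$ for which $h^{k}$ actually lifts to an automorphism of $X$ is $4$, $6$, or $3$ in the three listed cases; and (c) no involution lies in $\Aut(X)$, ruling out the dihedral alternative of Theorem \ref{Infinite automorphism group}. Step (a) is immediate from plugging $a=c=2$, $b=2n$ into the relations \eqref{Relations1}. The hypothesis $n\ge 3$ also shows that $S_{X}$ contains no isotropic vector (since $n^{2}-4$ is not a square) and no $(-2)$-vector (the equation $2(x^{2}+nxy+y^{2})=-1$ has no integer solution), so Theorem \ref{Infinite automorphism group} reduces the options to $\Aut(X)\cong\Z$ or $\Z_{2}\ast\Z_{2}$; since there are no $(-2)$-classes, the ample cone equals the positive cone and is preserved by every power of $h$ (which lies in the identity component of $O(S_X\otimes\R)$, as $\det h=1$ and both eigenvalues of $h$ are positive).

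For step (b), the Torelli theorem together with Remark \ref{anti-symplectic only} (and the fact that on a K3 of Picard rank two generic in this family only $\pm\id$ acts as a Hodge isometry of $T_X$) shows that $h^{k}$ lifts to an automorphism of $X$ iff $h^{k}$ acts on $A(S_X)$ as $\epsilon\cdot\id$ for some $\epsilon\in\{\pm 1\}$; the lift is symplectic when $\epsilon=+1$ and anti-symplectic when $\epsilon=-1$. Using Lemma \ref{criterion} together with the identity $h^{k}=\beta_{k}h-\beta_{k-1}I$, where $\beta_{0}=0$, $\beta_{1}=1$ and $\beta_{k+1}=n\beta_{k}-\beta_{k-1}$, a direct multiplication of $(h^{k}-\epsilon I)$ with $Q_{S_{X}}^{-1}$ reduces the integrality condition to the three congruences
\[
\beta_{k}\equiv 0\pmod 2,\qquad L_{k}\equiv 2\epsilon\pmod{2N},\qquad L_{k-1}\equiv n\epsilon\pmod{2N},
\]
where $N:=n^{2}-4$ and $L_{k}:=\beta_{k+1}-\beta_{k-1}=\xi^{k}+\xi^{-k}$ for $\xi=(n+\sqrt{n^{2}-4})/2$, a Lucas-type companion to $\beta_k$.

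The casework is then driven by two elementary observations. Modulo $2$, the recurrence forces $\beta_{k}$ even iff $k$ is even (when $n$ is even) or $3\mid k$ (when $n$ is odd). Direct factorization of polynomials in $n$ gives
\[
L_{2}=N+2,\quad L_{3}=n(N+1),\quad L_{4}-2=n^{2}N,\quad L_{5}-n=n(n^{2}-1)N,\quad L_{6}-2=(n^{2}-1)^{2}N.
\]
Since $n(n^{2}-1)$ and $(n^{2}-1)^{2}$ carry a factor of $2$ for every integer $n\ge 3$, the last two identities give $L_{6}\equiv 2$ and $L_{5}\equiv n\pmod{2N}$, so $k=6$ with $\epsilon=+1$ always works. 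For $n$ even, $L_{4}\equiv 2$ and $L_{3}\equiv n\pmod{2N}$ (using that $n^{2}$ and $n$ are even), so $k=4$ works with $\epsilon=+1$; the case $k=2$ fails because $N+2\not\equiv\pm 2\pmod{2N}$ when $n\ge 3$. For $n$ odd the only smaller candidate is $k=3$, and the congruence $L_{2}\equiv\pm n\pmod{2N}$ with $L_{2}=N+2$ and $N=(n-2)(n+2)$ forces $(n-2)\mid(n+2)$, hence $n=3$; a direct check then shows $\epsilon=-1$ works in that case. The main obstacle is managing these mod-$2N$ congruences uniformly in $n$ to extract the critical divisibility by $2$ from $(n^{2}-1)$ or from $n$ at exactly the right step.

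Finally for (c), suppose $\Aut(X)\cong\Z_{2}\ast\Z_{2}$. By Remark \ref{anti-symplectic only} any involution $\iota\in\Aut(X)$ is anti-symplectic and hence must act on $A(S_X)$ as $-\id$. Writing $\iota$ in the form given by Theorem \ref{theorem2} (so $\delta=-\alpha$, $\gamma=-n\alpha+\beta$, $\alpha^{2}-n\alpha\beta+\beta^{2}=1$) and applying Lemma \ref{criterion} to $(\iota+I)Q_{S_{X}}^{-1}$ produces the system
\[
\alpha\equiv 0\pmod 2,\qquad 2\alpha-n\beta+2\equiv 0\pmod{2N},\qquad -n\alpha+2\beta-n\equiv 0\pmod{2N}.
\]
For $n$ odd, reducing the third congruence mod $2$ forces $\alpha+1$ even, contradicting the first. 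For $n=2m$ even, eliminating $\beta$ between the last two congruences yields $(m^{2}-1)(\alpha+1)\equiv 0\pmod{4(m^{2}-1)}$, i.e.\ $\alpha\equiv 3\pmod 4$, again contradicting $\alpha\equiv 0\pmod 2$. Hence no involution lies in $\Aut(X)$, so $\Aut(X)\cong\Z$ in every case, completing the proof.
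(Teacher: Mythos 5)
Your proof is correct and rests on the same skeleton as the paper's: every infinite-order automorphism acts on $S_X$ as a power of $h$ (Theorem \ref{theorem1}), the powers of $h$ that lift to $\Aut(X)$ are detected by Lemma \ref{criterion} together with Proposition \ref{Glueing of lattices} and Torelli, and the dihedral alternative is excluded separately. You diverge in two places, both usefully. First, where the paper checks integrality of $(h^k\pm I_2)Q_{S_X}^{-1}$ by an inductive parity analysis of the entries $(\alpha_k,\beta_k)$ and asserts minimality with ``it is easily shown,'' you use $h^k=\beta_k h-\beta_{k-1}I$ and the companion sequence $L_k=\beta_{k+1}-\beta_{k-1}$ with the closed factorizations $L_2=N+2$, $L_3=n(N+1)$, $L_4-2=n^2N$, $L_5-n=n(n^2-1)N$, $L_6-2=(n^2-1)^2N$; this makes the mod-$2N$ verification and the exclusion of smaller exponents uniform in $n$ and fully explicit (I checked the congruence reduction: the integrality of $(h^k-\epsilon I)Q_{S_X}^{-1}$ is indeed equivalent to $L_k\equiv 2\epsilon$ and $L_{k\pm1}\equiv n\epsilon \pmod{2N}$, which forces $\beta_k$ even). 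Second, to rule out $\Z_2\ast\Z_2$ you work directly with the parametrization of an involution from Theorem \ref{theorem2} and obtain the incompatible congruences $\alpha\equiv 0\pmod 2$ versus $\alpha$ odd ($n$ odd) or $\alpha\equiv 3\pmod 4$ ($n=2m$); the paper instead invokes Remark \ref{a=c} to write the involution as $h^kE_{12}$ and then argues by parity and by multiplying with $g=h^{\pm4}$. Your route is more self-contained and treats all $n$ at once. One slip to fix: the claim that ``$n(n^2-1)$ and $(n^2-1)^2$ carry a factor of $2$ for every integer $n\ge 3$'' is false for $n$ even, where $(n^2-1)^2$ is odd; so $k=6$ with $\epsilon=+1$ does \emph{not} work for even $n$ (as it must not, since $h^6$ lifting would contradict $h^4$ generating). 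The assertion is only needed, and is true, for $n$ odd, so the three cases of the proposition are unaffected. Finally, your parenthetical appeal to genericity (only $\pm\id$ as Hodge isometries of $T_X$) when showing no smaller power lifts is the same implicit step the paper takes, so it does not put you below the paper's level of rigor.
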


\begin{proof}
As Claim in Section \ref{Proof of Theorem 1}, let $\alpha_k, \beta_k$ be the first row of $h^k$. Then we have $\alpha_{k+1}=n\alpha_k-\beta_k$ and $\beta_{k+1}=\alpha_k$. For example, $(\alpha_1,\beta_1)=(n,1)$, $(\alpha_0,\beta_0)=(1,0)$ and $(\alpha_{-1},\beta_{-1})=(0,-1)$, etc. 
It is easily shown that $(h^4-I_2)Q_{S_X}^{-1}$ is an integer matrix for even $n$ and $(h^6-I_2)Q_{S_X}^{-1}$ is an integer matrix for odd $n$. Moreover, these powers are the minimal in order to be an integer matrix. In other words, for even number $n$, $(h^k\pm I_2)Q_{S_X}^{-1}$ is not an integer matrix for any $k\leq 3$. Similarly, for $n\neq 3$ odd,  $(h^k\pm I_2)Q_{S_X}^{-1}$ is not an integer matrix for any $k\leq 5$. 

Now by Proposition \ref{Glueing of lattices}, $h^4$ ($n$ even) or $h^6$ ($n\neq 3$ odd) can be extended to an isometry of $H^2(X,\Z)$ and by Torelli theorem it defines an automorphism of $X$. 
Hence $h^4$ for $n$ even ($h^6$ for $n\neq 3$ odd) is the generator of automorphisms of infinite order of $\Aut(X)$. Moreover, by Lemma \ref{criterion}, both $h^4$ and $h^6$ are symplectic. 

By the same argument, $h^3$ is the generator of automorphisms of infinite order of $\Aut(X)$ for $n=3$ and is anti-symplectic.

If we assume that $\Aut(X)\cong  \Z_2\ast \Z_2$, then, by Theorem \ref{theorem2},
\begin{equation}\label{tau Example 1}
\tau^*\vert{S_X}=\begin{pmatrix} p&q\\q-np&-p \end{pmatrix},
\end{equation} 
where $p^2-npq+q^2=1$. Moreover, by Remark \ref{a=c}, $\tau^*\vert{S_X}$ or $\sigma^*\vert{S_X}$ is the matrix obtained from $h^k$ by interchanging columns for some $k$. For example, $\tau^*\vert{S_X}:=\tau_k=h^kE_{12}$ or
\begin{equation}\label{}
\tau^*\vert{S_X}=\begin{pmatrix} \alpha_k&\beta_k\\-\beta_k&\alpha_k-n\beta_k \end{pmatrix}\begin{pmatrix} 0&1\\1&0 \end{pmatrix}=\begin{pmatrix} \beta_k&\alpha_k\\ \alpha_k-n\beta_k&-\beta_k \end{pmatrix},
\end{equation} 
where $(\alpha_k, \beta_k)$ is the first row of $h^k$. As in \eqref{power of h}, we have that $\alpha_{k+1}=n\alpha_k-\beta_k, \beta_{k+1}=\alpha_k$.
% and some values of $(\alpha_k,\beta_k)$ are given in the table below.
%\begin{table}[h]\label{1}
%\centering
%\begin{tabular}{c|c|c|c|c|c}
%\hline\hline
%$k$& $(\alpha_k,\beta_k)$ &$k$& $(\alpha_k,\beta_k)$ &$k$& $(\alpha_k,\beta_k)$  \\ %[0.5ex]
%\hline
% $-3$  & $(n,-1-n^2)$  & $0$ &$(1,0)$  & $3$ &$(n^3-2n,n^2-1)$\\
%$-2$  & $(1,-n)$   &$1$ & $(n,1)$  & $4$ &$(n^4-3n^2-1,n^3-2n)$\\
%$-1$  & $(0,-1)$   & $2$ & $(n^2-1,n)$  & $5$ &$(n^5-4n^3+n,n^4-3n^2-1)$\\
%\hline
%\hline
%\end{tabular}
%\label{table:nonlin}
%\end{table}

Since $\tau$ is anti-symplectic, by Lemma \ref{criterion}, $(\tau^*\vert{S_X}+\id)Q_{S_X}^{-1}$ is an integer matrix. Note that if
\begin{equation}\label{tau+id}
(\tau^*\vert{S_X}+\id)Q_{S_X}^{-1}=\frac{1}{2(n^2-4)}\begin{pmatrix} n\alpha_k-2\beta_k-2&-2\alpha_k+n\beta_k+n\\-2\alpha_k+n\beta_k+n&n\alpha_k-2\beta_k-2-(n^2-4)\beta_k \end{pmatrix}
\end{equation} 
is an integer matrix, then $\beta_k$ is even. 

Now if $n$ is odd ($\neq 3$), by induction argument, we can see that $\beta_k$ is even only if $k=3l$ and $\alpha_k$ is even only if $k=3l-1$. So in this case, $-2\alpha_{3l}+n\beta_{3l}+n$ is not divisible by $2$, hence \eqref{tau+id} is not an integer matrix.

On the other hand, if $n$ is even, $\beta_k$ is even only if $k=2l$ and $\alpha_k$ is even only if $k=2l-1$. Hence, $\tau^*\vert{S_X}:=\tau_{2l}^*\vert{S_X}=h^{2l}E_{12}$ for some $l$. Now by multiplying $g=h^4$ or $g^{-1}=h^{-4}$, we may assume that $h^2E_{12}$ or $E_{12}$ is an isometry of $S_X$ of anti-symplectic automorphism since $g$ is symplectic. However, by Lemma \ref{criterion}, this is not possible. 
%Moreover, the corresponding Salem polynomial is the characteristic polynomial of $h^3, h^4$ or $h^6$.
\end{proof}

%\begin{remark} In \cite{HKL}, we showed that all symplectic (anti-symplectic, resp.) automorphisms of K3 surfaces with Picard number two have Salem traces $\alpha^2-2$ for any $\alpha\in \Z_{\geq 4}$ ($\alpha^2+2$ for any $\alpha\in \Z_{\geq 4}\setminus \{5,7,13,17\}$, resp.).
%\end{remark}

\subsection{Example 2}
We consider a K3 surface $X$ whose Picard lattice $S_X$ has the following intersection matrix
\begin{equation}\label{Example 2}
\begin{pmatrix} 2&n\\n&2 \end{pmatrix}.
\end{equation} 
Note that $d:=-\disc(S_X)=n^2-4>0$ and for $n\neq 3$,  $\Aut(X)$ is either $\Z$ or $\Z_2\ast \Z_2$ by Theorem  \ref{Infinite automorphism group}.  In \cite[Example 4]{GLP}, Galluzzi, Lombardo and Peters proved that  $\Aut(X)\cong \Z_2\ast \Z_2$ by finding generators. We can also find the same generators by using our results.

By Theorem \ref{theorem1}, an automorphism $g$ of infinite order in \eqref{Automorphism} satisfies $\gamma=-\beta, \delta=\alpha-n\beta$ and $\alpha^2-n\alpha\beta+\beta^2=1$. Moreover, by Theorem \ref{theorem1}, $g^*\vert S_X$ is a power of $h$, where  
\begin{equation}
h=\begin{pmatrix} n&1\\-1&0 \end{pmatrix}.
\end{equation} 
As the example above, we have that $h^2$ is a symplectic automorphism.

Moreover, by Theorem \ref{theorem2}, we may have that an anti-symplectic involution $\sigma$ which satisfies 
\begin{equation}
\sigma^*\vert S_X=\begin{pmatrix} p&q\\q-np&-p \end{pmatrix},
\end{equation} 
where $p^2-npq+q^2=1$. By Remark \ref{a=c}, $h^2=\sigma^*\vert S_X\circ \tau^*\vert S_X$ and $\sigma^*\vert S_X=hE_{12}, \tau^*\vert S_X=h^{-1}E_{12}$, hence
\begin{equation}
\sigma^*\vert S_X=\begin{pmatrix} 1&n\\0&-1 \end{pmatrix} \text{ and }\tau^*\vert S_X=\begin{pmatrix} -1&0\\n&1 \end{pmatrix}.
\end{equation} 
Moreover, by Lemma \ref{criterion} and Torelli theorem, $\sigma$ and $\tau$ are anti-symplectic involutions. Hence $\Aut(X)\cong \Z_2\ast \Z_2$ with generators $\sigma$ and $\tau$.

%\begin{remark} In \cite{W}, Wehler proved this for $n=4$, which is the Picard lattice of K3 surface in $3$-dimentional flag variety $\mathbb{F}$ of points and lines in $\BP^2$ defined by a section $s\in H^0(\mathbb{F}, \mathcal{O}_{\mathbb{F}}(2,2))$. In \cite{GL}, Galluzzi and Lombardo proved this for $n>3$ odd.
%\end{remark}
\begin{remark}
When $n=4$, $X$ is the complete intersection of bidegree $(1,1)$ and $(2,2)$ hypersurfaces in $\BP^2\times \BP^2$. When $n=5$, $X$ is the complete intersection of bidegree $(1,2)$ and $(2,1)$ hypersurfaces in $\BP^2\times \BP^2$.  
\end{remark}

\end{document}